\tikzstyle{vertex}=[circle,draw=black,fill=black,inner sep=0,minimum size=3pt,text=white,font=\footnotesize]
\newtheorem{thm}{Theorem}[section]
\newtheorem{lemma}[thm]{Lemma}
\newtheorem{proposition}[thm]{Proposition}
\newtheorem{corollary}[thm]{Corollary}
\newtheorem*{lemma*}{Lemma}
\newtheorem*{proposition*}{Proposition}
\newtheorem*{theorem*}{Theorem}
\newtheorem*{corollary*}{Corollary}
\newcommand\ex{\ensuremath{\mathrm{ex}}}
\newcommand\cN{{\mathcal N}}
\newcommand{\ignore}[1]{}
\title{On extremal values of some degree-based topological indices with a forbidden or a prescribed subgraph}
\author{Dániel Gerbner\\ \small Alfr\'ed R\'enyi Institute of Mathematics, HUN-REN\\
\small \texttt{gerbner.daniel@renyi.hu}}
\date{}
\begin{document}

\maketitle

\begin{abstract} 
Xu in 2011 determined the largest value of the second Zagreb index in an $n$-vertex graph $G$ with clique number $k$, and also the smallest value with the additional assumption that $G$ is connected. We extend these results to other degree-based topological indices. The key property of the clique number in the first result is that $G$ is $K_{k+1}$-free, while the key property in the second result is that $G$ contains a $K_{k+1}$. We also extend our investigations to other forbidden/prescribed subgraphs. Our main tool is showing that several degree-based topological indices are equal to the weighted sum of the number of some subgraphs of $G$.
\end{abstract}

\section{Introduction}


Various graph invariants based on the degrees of the vertices have been studied, especially in chemical graph theory. Extremal values of such invariants in certain classes of graphs have attracted lot of attention, see for example the surveys \cite{agmm, bdfg,gmm}. 

In this paper we deal with the following indices. The \textit{first Zagreb index} \cite{gutr} of a graph $G$ is $M_1(G)=\sum_{v\in V(G)} d(u)^2$. The \textit{forgotten topological index} \cite{gutr,fg} of a graph $G$ is $F(G)=\sum_{v\in V(G)} d(u)^3$. 

The \textit{second Zagreb index} \cite{grtw} of a graph $G$ is $M_2(G):=\sum_{uv\in E(G)}d(u)d(v)$. The \textit{reduced second Zagreb index} \cite{fge} is $RM_2(G)=\sum_{uv\in E(G)}(d(u)-1)(d(v)-1)$. The \textit{first hyper Zagreb index} \cite{srs} of $G$ is $HM_1(G)=\sum_{uv\in E(G)}(d(u)+d(v))^2$.  The \textit{second hyper Zagreb index} \cite{gfsj} of $G$ is $HM_2(G)=\sum_{uv\in E(G)}(d(u)d(v))^2$. The \textit{hyper $F$-index} \cite{gg} of a graph $G$ is $HF(G)=\sum_{uv\in E(G)}(d(u)^2+d(v)^2)^2$. The \textit{first reformulated Zagreb index} \cite{mnt} of a graph $G$ is $EM_1(G)=\sum_{uv\in E(G)}(d(u)+d(v)-2)^2$. The \textit{first K Banhatti index} \cite{kulli} of a graph $G$ is $B_1(G)=\sum_{uv\in E(G)}(3d(u)+3d(v)-4)$. The \textit{second K Banhatti index} \cite{kulli} of a graph $G$ is $B_2(G)=\sum_{uv\in E(G)}(d(u)+d(v)-2)(d(u)+d(v))$.

Let $r$ and $s$ be non-negative integers. The \textit{general sum-connectivity index} \cite{zt} of a graph $G$ is $\chi_r(G)=\sum_{uv\in E(G)}(d(u)+d(v))^r$. The \textit{general zeroth order Randi\'c index} or \textit{first general Zagreb
index} or \textit{variable first Zagreb index} of $G$ is $R^{(0)}_r(G)=\sum_{v\in V(G)} d(u)^r$, see \cite{cy}. The \textit{general Randi\'c index} \cite{be} of a graph $G$ is $R_r(G)\sum_{uv\in E(G)}(d(u)d(v))^r$. The \textit{generalized Zagreb index} \cite{ai} of a graph $G$ is $M_{r,s}(G)=\sum_{uv\in E(G)}(d(u)^rd(v)^s+d(u)^sd(v)^r)$. 
Note that in these indices usually $r$ and $s$ may take any real values; we consider only integer values in most of the paper. 

Let $A_0$ denote the set of these indices.
They are not arbitrarily chosen. They share the property that they can be written as the weighted sum of some subgraphs of $G$. 

Let $B_t(p,q)$ denote the following graph, which we call \textit{generalized book graph}. We take an edge $uv$, we add $t$ vertices joined to both $u$ and $v$, $p$ vertices joined to $u$ and $q$ vertices joined to $v$. We call $u$ and $v$ the \textit{rootlet} vertices, the $t$ vertices adjacent to $u$ and $v$ are the \textit{page vertices} and the $p+q$ vertices adjacent only to either $u$ or $v$ are the \textit{leaf vertices}. Note that $B_t(0,0)$ is the so-called book graph and $B_0(p,q)$ is the so-called double star $S_{p,q}$. 

Given graphs $H$ and $G$, we denote the number of (not necessarily induced) copies of $H$ in $G$ by $\cN(H,G)$.
We consider graph parameters of the type $\sum_{t,p,q\le k} w_{t,p,q}\cN(B_t(p,q),G)$. Let $A$ denote the set of these parameters with each $w_{t,p,q}\ge 0$.

\begin{proposition}\label{main}
    $A_0\subset A$. In other words, each of the indices listed above is equal to the weighted sum of the number of some generalized book graphs, with non-negative coefficients.
\end{proposition}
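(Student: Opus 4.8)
The plan is to split $A_0$ into the vertex-based indices ($M_1$, $F$, and more generally $R^{(0)}_r$), which have the form $\sum_{v\in V(G)} d(v)^r$, and the edge-based indices (all the rest), which have the form $\sum_{uv\in E(G)} f(d(u),d(v))$ for a symmetric polynomial $f$. The crucial observation is that on any edge $uv$ we have $d(u),d(v)\ge 1$, so after substituting $x=d(u)-1$ and $y=d(v)-1$ the variables $x,y$ range over non-negative integers. I would first check, one index at a time, that after this substitution each $f$ becomes a polynomial $g(x,y)=\sum_{i,j}a_{ij}x^iy^j$ with all $a_{ij}\ge 0$. This is exactly where the ``$-1$'' and ``$-2$'' appearing in the reduced and reformulated indices are absorbed: for instance $RM_2$ gives $g=xy$, $EM_1$ gives $g=(x+y)^2$, $B_1$ gives $g=3x+3y+2$, and $B_2$ gives $g=(x+y)(x+y+2)$, while the remaining indices are built from the atoms $x+1$, $y+1$, $x+y$, $x+y+2$ and non-negative constants (e.g. $\chi_r=(x+y+2)^r$, $R_r=((x+1)(y+1))^r$, $M_{r,s}=(x+1)^r(y+1)^s+(x+1)^s(y+1)^r$), so each resulting polynomial manifestly has non-negative coefficients.

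It then suffices to prove one lemma: if $g(x,y)=\sum_{i,j}a_{ij}x^iy^j$ with $a_{ij}\ge 0$, then $\sum_{uv\in E(G)} g(d(u)-1,d(v)-1)$ is a non-negative combination of book graph counts $\cN(B_t(p,q),G)$. I would establish this in two conversions. First, pass from the monomial basis to the binomial basis via Stirling numbers of the second kind: since $x^i=\sum_{p} S(i,p)\,p!\binom{x}{p}$ with $S(i,p)\ge 0$, every monomial $x^iy^j$, and hence $g$, is a non-negative combination of the products $\binom{x}{p}\binom{y}{q}$. Second, I would show the symmetrized edge sum $\sum_{uv\in E(G)}\bigl(\binom{d(u)-1}{p}\binom{d(v)-1}{q}+\binom{d(u)-1}{q}\binom{d(v)-1}{p}\bigr)$ is a non-negative combination of book graph counts. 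For a fixed ordered edge $(u,v)$, the number $\binom{d(u)-1}{p}\binom{d(v)-1}{q}$ counts the pairs $(S_u,S_v)$ with $S_u\subseteq N(u)\setminus\{v\}$, $|S_u|=p$ and $S_v\subseteq N(v)\setminus\{u\}$, $|S_v|=q$; classifying such a pair by the overlap size $j=|S_u\cap S_v|$ turns the $j$ common vertices into page vertices and the rest into leaf vertices, i.e. describes exactly a rooted copy of $B_j(p-j,q-j)$. Summing over all ordered edges gives $\sum_{j\ge 0}$ of the number of rooted copies of $B_j(p-j,q-j)$, and a rooted copy is an unrooted copy taken with positive multiplicity (namely $1$, or $2$ when the two leaf sets have equal size), so the result is a non-negative combination of the $\cN(B_t(p,q),G)$.

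The vertex-based indices follow the same philosophy with stars in place of books. Writing $d(v)^r=\sum_s S(r,s)\,s!\binom{d(v)}{s}$ and summing over $v$, I would use that $\sum_{v\in V(G)}\binom{d(v)}{s}$ counts copies of the star $K_{1,s}$, which is the book graph $B_0(s-1,0)$ (for $s\ge 2$ the centre is the unique non-leaf, while $s=1$ contributes the edge $B_0(0,0)$ with multiplicity $2$). Since the coefficients $S(r,s)\,s!$ are non-negative, this expresses $R^{(0)}_r$ as a non-negative combination of book graph counts as well.

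I expect the main obstacle to be the second conversion: making the page-versus-leaf inclusion–exclusion precise under the \emph{not necessarily induced} counting convention (so that a common neighbour may legitimately be selected as a leaf, producing a genuinely different subgraph from the one in which it is a page), and correctly tracking the rooting multiplicity so that non-negativity is preserved. Verifying non-negativity of the monomial coefficients uniformly for the general families $\chi_r$, $R_r$, $M_{r,s}$ is a secondary point, but it is immediate once one notes they are generated by the non-negative atoms above.
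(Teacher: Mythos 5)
Your proposal is correct, and its combinatorial engine is the same as the paper's: monomials in the degrees are converted into counts of neighbor selections, and a selection around an edge is classified by the overlap of the two chosen sets into a book $B_j(p-j,q-j)$. (The coefficients $p!\,S(i,p)$ in your binomial-basis conversion are exactly the surjection counts $\sum_{i}(-1)^i\binom{|U|}{i}(|U|-i)^a$ that the paper invokes when counting how often a fixed pair of sets $U,V$ arises from repeated selections.) Where you genuinely depart from the paper, to your advantage, is the substitution $x=d(u)-1$, $y=d(v)-1$ together with the index-by-index verification of non-negative coefficients in the shifted basis. The paper expands directly into monomials $d(u)^ad(v)^b$, and for $RM_2$, $EM_1$, $B_1$ and $B_2$ this raw expansion has \emph{negative} coefficients; the paper then simply asserts that ``it is clear that $w_{t,p,q}\ge 0$'', leaving the cancellation of the negative contributions implicit. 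Your shifted basis, with the other endpoint excluded from the neighbor sets so that $x,y\ge 0$ on every edge, makes the non-negativity manifest and so supplies a justification for precisely the least transparent step of the published argument; your star-based treatment of the vertex-sum indices $R^{(0)}_r$ matches what the paper does implicitly by allowing $b=0$.

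One small inaccuracy, which does not damage the proof: your rooted-to-unrooted multiplicities (``$1$, or $2$ when the two leaf sets have equal size'') are wrong for degenerate books. An unrooted star $K_{1,p+1}=B_0(p,0)$ admits $p+1$ ordered rootings (any leaf can serve as the rootlet $v$), and a triangle $B_1(0,0)$ admits $6$, not $2$. All your argument actually needs is that the multiplicity is a positive constant depending only on $(t,p,q)$ and not on $G$ — which is true — so you should state the lemma that way rather than computing the constants.
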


We note that this was observed for the second Zagreb index in \cite{gerbn} and for the general zeroth order Randic index in \cite{ger4}.

There are (at least) 4 ways to use the above proposition. We can use existing results on counting generalized books. Unfortunately, we are not aware of such results in this generality, but we can use results on counting double stars. We can use existing results on counting general graphs. We can prove new results on counting generalized books, in some cases it is easier to deal with them then with the degrees. Or we can adapt the methods used for counting graphs to our settings. We will show examples for each of these four ways. 

One of the most studied extremal problem concerning topological indices is the maximum, given the number of vertices and edges. For $m=\binom{a}{2}+b$ with $0\le b<a$ and $e\le \binom{n}{2}$, let the \textit{quasi-clique} $K_n^m$ be the graph consisting of a $K_a$, another vertex joined to $b$ vertices of the $K_a$, and $n-a-1$ further vertices, with no incident edges. Bollob\'as, Erd\H os and Sarkar \cite{bes} showed that among $n$-vertex graphs with $m$ edges, the quasi-clique has the largest $R_1$-value. We do not know such characterization for other $\alpha$ in the case of the general Randic index. We can show that if $m$ is sufficiently large compared to $n$, then the quasi-clique has asymptotically the largest value for any index in $A$.

\begin{proposition}
For each index $Q\in A$, there is a constant $c$ such that if $m\ge (1-c)\binom{n}{2}$, then for each $n$-vertex $G$ with $m$ edges we have that $Q(G)\le (1+o(1))Q(K_n^m)$.
\end{proposition}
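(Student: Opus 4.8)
The plan is to reduce the statement to a single generalized book and then to show that, in the dense regime $m\ge(1-c)\binom n2$, the quasi-clique asymptotically maximizes the number of copies of that book among all $n$-vertex graphs with $m$ edges.

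\emph{Reduction to one book.} By the definition of $A$ we may write $Q=\sum_{t,p,q\le k}w_{t,p,q}\,\cN(B_t(p,q),\cdot)$ with all $w_{t,p,q}\ge 0$, a sum of fewer than $(k+1)^3$ terms. Since the weights are non-negative and the sum is finite, it suffices to prove that for each fixed book $H=B_t(p,q)$ one has $\cN(H,G)\le(1+o(1))\,\cN(H,K_n^m)$ uniformly over $n$-vertex graphs $G$ with $m$ edges: summing these inequalities against the weights and replacing the finitely many error terms by their maximum yields the bound for $Q$. The degenerate book $B_0(0,0)$ is a single edge, for which $\cN(B_0(0,0),G)=m=\cN(B_0(0,0),K_n^m)$ exactly, so from now on assume $(t,p,q)\neq(0,0,0)$.

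\emph{The benchmark.} Put $h=t+p+q+2$ and $e=2t+p+q+1$ for the numbers of vertices and edges of $H$, and record the inequality $2e-h=3t+p+q>0$, which holds for every book other than the single edge. From $\binom a2\le m$ and $m\ge(1-c)\binom n2$ we get $a=(1+o(1))\sqrt{2m}\ge(1-o(1))\sqrt{1-c}\,n$. All but $o(a^h)$ of the copies of $H$ in the quasi-clique lie inside its clique $K_a$ (the degree-$b$ vertex and the isolated vertices account for only $O(a^{h-1})$ copies), so
\[
\cN(H,K_n^m)=(1+o(1))\,\frac{a^h}{|\Aut H|}.
\]

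\emph{The upper bound (the crux).} I would view $G$ as $K_n$ with a set $D$ of $\binom n2-m\le c\binom n2$ non-edges removed, so that $\cN(H,G)=\cN(H,K_n)-N_D$, where $N_D$ counts the copies of $H$ in $K_n$ meeting $D$. The whole content is to show that $N_D$ is \emph{minimized} — equivalently $\cN(H,G)$ is maximized — when the removed edges are concentrated into a clique-like set, namely the complement of the quasi-clique; this is a Kruskal--Katona-type extremal statement, and for $H=K_3$ it is exactly Kruskal--Katona. A convenient analytic form is that a standard rooted count bounds $\cN(H,G)$ above by a constant times $\sum_{uv\in E(G)}|N(u)\cap N(v)|^t\,d(u)^p\,d(v)^q$ up to lower-order terms; the guiding heuristic is then that making a few vertices isolated (concentrating all the degree deficiency $n-1-d(x)$ on them) removes them from the edge sum while barely affecting the degrees and codegrees of the remaining vertices, whereas spreading the deficiency lowers many degrees and codegrees at once. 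The inequality $2e>h$ is exactly what guarantees that concentration beats spreading: a clique on $\sqrt{2m}$ vertices yields $\sim(2m)^{h/2}$ copies, which dominates the $\sim\rho^{e}n^h$ copies of a spread-out graph of the same density $\rho=1-c$ precisely because $h/2<e$, so the quasi-clique is indeed the right extremal object. I expect the main obstacle to be turning this concentration heuristic into a rigorous bound that is tight to the order needed for the clean ratio $1+o(1)$ (rather than $1+O(c)$), since a crude union bound over $D$ only matches the quasi-clique to first order in $c$; this should be achievable either by a compression/shifting argument moving $G$ toward the colexicographic graph without decreasing $\cN(H,\cdot)$, or by a rearrangement/convexity estimate for the displayed degree--codegree sum, with $c$ chosen small enough in terms of $t,p,q\le k$. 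Combining with the benchmark then gives $\cN(H,G)\le(1+o(1))\,a^h/|\Aut H|=(1+o(1))\,\cN(H,K_n^m)$, as required.
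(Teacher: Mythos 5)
Your overall architecture matches the paper's: decompose $Q=\sum_{t,p,q\le k}w_{t,p,q}\,\cN(B_t(p,q),\cdot)$ with non-negative weights (Proposition \ref{main}), reduce to showing $\cN(H,G)\le(1+o(1))\cN(H,K_n^m)$ for each fixed book $H$ with $c$ taken small enough for all finitely many books at once, and compute the benchmark $\cN(H,K_n^m)=(1+o(1))a^h/\abs{\Aut H}$ with $a=(1+o(1))\sqrt{2m}$. All of that is correct and is exactly how the paper argues.

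The gap is at what you yourself call the crux. The statement that for every fixed $H$ there is $c_H>0$ such that $m\ge(1-c_H)\binom{n}{2}$ forces $\cN(H,G)\le(1+o(1))\cN(H,K_n^m)$ for all $n$-vertex graphs $G$ with $m$ edges is not something the paper proves either: it is quoted as a theorem of Gerbner, Nagy, Patk\'os and Vizer, and the paper's entire proof of the proposition is that citation plus Proposition \ref{main}. You do not supply a proof of it: you reformulate it (minimizing the count $N_D$ of copies meeting the deleted set $D$), note it is Kruskal--Katona for $H=K_3$, give the concentration-versus-spreading heuristic via $2e-h=3t+p+q>0$ (which, as stated, only beats the \emph{quasirandom} competitor of density $1-c$, not an arbitrary $G$), and defer the real work to an unspecified compression or rearrangement argument while explicitly flagging that you expect it to be hard. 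That deferral cannot be waved through: shifting/compression toward the colex graph is monotone for clique counts (which is precisely why Kruskal--Katona settles $K_3$), but for non-complete $H$ --- in particular books, whose leaf and page structure is not monotone under shifts --- compressions can decrease $\cN(H,\cdot)$, and the extremal graph for $\cN(H,\cdot)$ given $n$ and $m$ is delicate already for $H=P_3$ (the Ahlswede--Katona problem, where the answer alternates between quasi-star and quasi-clique). So as written the proposal is a correct reduction plus an honest statement of a missing nontrivial theorem; it becomes a complete proof, essentially identical to the paper's, once the heuristic is replaced by the citation and $c$ is chosen as the minimum of the constants $c_{B_t(p,q)}$ over $t,p,q\le k$.
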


 Here $o(1)$ means a sequence that goes to 0 as $n$ (thus $m$) goes to infinity. This proposition simply follows from a theorem of Gerbner, Nagy, Patk\'os and Vizer that states the following. For every graph $H$ there is a constant $c_H$ such that if $m\ge (1-c_H)\binom{n}{2}$, then the quasi-clique contains asymptotically the largest number of copies of $H$. Applying this to each $B_t(p,q)$ with $t,p,q\le k$ and using Proposition \ref{main}, we obtain 
 the above proposition.

 \smallskip

Xu \cite{xu} showed that in an $n$-vertex graph $G$ with clique number $k$, we have that $M_2(G)\le M_2(T(n,k))$, where $T(n,k)$ is the \textit{Tur\'an graph}, i.e., the complete $k$-partite $n$-vertex graph with each part of order $\lfloor n/k\rfloor$ or $\lceil n/k\rceil$. We give a new proof for this result. Observe that the only terms are of the form $d(u)d(v)$, thus $w_{t,p,q}\neq 0$ if and only if $2t+p+q\le 1$, i.e., we count a weighted sum of the graphs $P_2,P_3,P_4,K_3$. Here $P_\ell$ is the path on $\ell$ vertices. We know that among $n$-vertex $K_{k+1}$-free graphs, the Tur\'an graph contains the most copies of each of these subgraphs. This was shown for $P_2$ by Tur\'an \cite{T}, for $K_3$ (and other cliques) by Zykov \cite{zykov}, for $P_3$ by Gy\H ori, Pach and Simonovits \cite{gypl}. For $P_4$, this was shown in \cite{gypl} for $k=2$, in \cite{ger} for $k=3$ and in \cite{qxg} for any $k$. Therefore, Xu's result is implied.

The largest number of copies of $H$ in $F$-free graphs is denoted by $\ex(n,H,F)$ and is called the \textit{generalized Tur\'an number}. It has attracted a lot of attention since its sytematic study was initiated in \cite{AS}, but unfortunately we are not aware of results counting $B_t(p,q)$ in this setting either, except in the case $B_t(p,q)$ is a double star or a triangle.
For example, it is also known that if $k\ge 4$ and $F$ is a $(k+1)$-chromatic graph with a color-critical edge, i.e., an edge whose deletion decreases the chromatic number, then the number of copies of each of $P_2,P_3,P_4,K_3$ in $n$-vertex $F$-free graphs is maximized by $T(n,k)$ \cite{S,gerpal,hhl,zykov}, if $n$ is sufficiently large. This shows that $M_2$ is also maximized by $T(n,k)$. Let us remark here that several generalized Tur\'an results give the exact value of $\ex(n,H,F)$ only for sufficiently large $n$, often with an unspecified threshold. 

Given an index $Q$, let us denote by $\ex_Q(n,F)$ the largest value of $Q$ among $F$-free $n$-vertex graphs.
Let us summarize some of the immediate consequences of existing generalized Tur\'an results. Let $A_0'$ denote the set of indices in $A_0$ where each polynomial has degree at most 2 in each variable. This means each index in $A_0$ are in $A_0'$ except for the forgotten index, the hyper $F$-index and the indices with $r$ (or $r$ and $s$) in the definition. Those indices are in $A_0'$ if $r\le 2$ and $s\le 2$. The \textit{F\"uredi graph} \cite{fur} $F(n,t)$ is a $K_{2,t}$-free $n$-vertex graph with $(1-o(1))\ex(n,F)$ edges. For each of our indices $Q$, one can easily calculate $Q(F(n,t))$ asymptotically using that each degree in $F(n,t)$ is $(1+o(1))\sqrt{n}$.

\begin{thm}\label{gentu}

    \textbf{(i)} For any index $Q\in A$, if $k$ is large enough, then $\ex_Q(n,K_k)=Q(T(n,k-1))$.

    \textbf{(ii)} $\ex_{M_2}(n,P_k)=(1+o(1))Q(K_{\lfloor (k-1)/2\rfloor,n-\lfloor (k-1)/2\rfloor})=(1+o(1))(\lfloor (k-1)/2\rfloor)^2n^2$.

        \textbf{(iii)} $\ex_{M_2}(n,C_{2k})=(1+o(1))Q(K_{k-1,n-k+1})=(1+o(1))(k-1)^2n^2$.
    
\textbf{(iv)} $\ex_{M_2}(n,K_{2,t})=(1+o(1))Q(F(n,t))=(1+o(1))n^{5/2}/2$. 

\textbf{(v)} For any index $Q\in A_0'$, we have $\ex_{Q}(n,K_{2,t})=(1+o(1))Q(F(n,t))$.

    
\end{thm}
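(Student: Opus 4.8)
The plan is to pass from the index to book counts via Proposition~\ref{main}, to exploit that $K_{2,t}$-freeness bounds all codegrees by $t-1$, and then to reduce everything to counting double stars, for which near-regular graphs such as $F(n,t)$ are asymptotically extremal. Since $F(n,t)$ is itself $K_{2,t}$-free, the lower bound $\ex_Q(n,K_{2,t})\ge Q(F(n,t))$ is immediate, and using that every degree of $F(n,t)$ is $(1+o(1))\sqrt{(t-1)n}$ one computes $Q(F(n,t))$ to leading order. The content is therefore the matching upper bound.

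First I would write, using Proposition~\ref{main} together with the defining restriction of $A_0'$ (degree at most $2$ in each of $d(u),d(v)$), the expansion $Q(G)=\sum w_{a,p,q}\cN(B_a(p,q),G)$ with all $w_{a,p,q}\ge 0$ and every occurring book satisfying $a+p\le 2$ and $a+q\le 2$. For a $K_{2,t}$-free $G$ the two rootlet vertices of any copy of $B_a(p,q)$ have at most $t-1$ common neighbors, so the number of ways to place the $a$ page vertices is at most $\binom{t-1}{a}=O(1)$. Bounding this page factor by a constant and dropping the bounded overlap between pages and leaves, each book count is controlled by a double-star count:
\[
\cN(B_a(p,q),G)\le \binom{t-1}{a}\sum_{uv\in E(G)}\binom{d(u)}{p}\binom{d(v)}{q}=O\big(\cN(S_{p,q},G)\big),
\]
with $p,q\le 2$. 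The key structural input is the degree constraint coming from $K_{2,t}$-freeness: counting cherries gives $\sum_v\binom{d(v)}{2}=\sum_{\{x,y\}}\mathrm{codeg}(x,y)\le (t-1)\binom n2$, hence $\sum_v d(v)^2\le (1+o(1))(t-1)n^2$, with $F(n,t)$ essentially attaining equality in a near-regular way. I would then feed this constraint, together with the codegree bound (which caps the edge count and forbids large cliques), into the optimization of $\sum_{uv\in E}d(u)^pd(v)^q$ over admissible degree sequences, invoking the available results on the generalized Tur\'an numbers $\ex(n,S_{p,q},K_{2,t})$ of double stars, which identify $F(n,t)$ as asymptotically extremal. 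Summing the per-book bounds against the non-negative weights $w_{a,p,q}$ would then yield $Q(G)\le(1+o(1))Q(F(n,t))$.

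The main obstacle is the double-star optimization, and within it the degenerate double stars $S_{p,0}$, that is, the stars $K_{1,p+1}$. Such terms can enter the decomposition (for instance from a $d(u)^2$ summand), and on the near-star $K_{1,n-1}$, which is $K_{2,t}$-free, one has $\cN(S_{p,0},K_{1,n-1})=\binom{n-1}{p+1}=\Theta(n^{p+1})$, far exceeding the $\Theta(n^{(3+p)/2})$ value attained in $F(n,t)$. Consequently the upper bound cannot be obtained by a book-by-book comparison with $F(n,t)$; instead one must argue that every such star-type contribution is dominated, to leading order, by a \emph{balanced} double-star term $S_{p,q}$ with $p,q\ge 1$ that is present in $Q$, is of strictly higher order in $n$, and is maximized by $F(n,t)$. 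For a balanced $S_{p,q}$ both central vertices must have large degree, so the cherry bound $\sum_v d(v)^2=O(n^2)$ forces the extremal degree sequence to be near-regular with $d(v)\approx\sqrt{(t-1)n}$, matching $F(n,t)$.

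The delicate step is thus twofold: identifying, for each $Q\in A_0'$, the highest-order book and verifying that it is of balanced type so that it governs the asymptotics; and turning the global constraint $\sum_v d(v)^2=O(n^2)$ together with the bounded-codegree restriction into a bound on $\sum_{uv\in E}d(u)^pd(v)^q$ that is tight in the leading constant, not merely in order of magnitude. Matching that constant to $Q(F(n,t))$ requires ruling out any gain from concentrating degree on a few vertices, which is precisely what the near-regularity and near-maximal edge count of $F(n,t)$ supply.
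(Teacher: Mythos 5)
Your proposal addresses only parts \textbf{(iv)} and \textbf{(v)}: nothing in it touches \textbf{(i)}--\textbf{(iii)}, which in the paper are not consequences of the book decomposition at all but of external generalized Tur\'an results --- \textbf{(i)} is the ``eventually Tur\'an-good'' theorem of Morrison, Nir, Norin, Rza{\.z}ewski and Wesolek \cite{mnnrw} applied to each $B_t(p,q)$, while \textbf{(ii)} and \textbf{(iii)} follow from the path-counting results of \cite{gystz} and \cite{ggymv}, with the triangle count absorbed into the error term since $\ex(n,K_3,P_k)$ and $\ex(n,K_3,C_{2k})$ are of lower order. So even granting everything you write, the theorem as stated is not proved. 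For the $K_{2,t}$ parts your skeleton does match the paper's: decompose $Q$ into books via Proposition \ref{main}, cap the page choices by the codegree bound $\binom{t-1}{a}$, and reduce to double stars; and you correctly identified the genuine obstruction, namely that unbalanced books are not maximized by $F(n,t)$ --- the paper makes the same observation for $B_1(1,1)$, which has $\Theta(n^3)$ copies in a star plus a perfect matching but only $O(n^{5/2})$ in $F(n,t)$.

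The gap is that you stop exactly at this crux. Your plan requires showing, for each $Q\in A_0'$, that every star-type contribution is dominated by a balanced double-star term of strictly higher order present in $Q$, and you supply no argument for this; moreover it is not a deferred technicality, because the postulated dominating balanced term need not exist. For instance $HM_1=\sum_{uv\in E(G)}(d(u)+d(v))^2$ lies in $A_0'$, its expansion contains the star $K_{1,3}$ (from the $d(u)^2$ summands) whose count on the $K_{2,t}$-free graph $K_{1,n-1}$ is $\Theta(n^3)$, yet no $d(u)^2d(v)^2$ term, hence no $S_{2,2}$, is available to dominate it --- so a domination argument of the kind you sketch cannot run uniformly over $A_0'$ and must be replaced by something concrete. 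The paper's closing move is different: it proves the order-of-magnitude separation $\ex(n,H,K_{2,t})=O(n^3)$ for every relevant book $H$ other than $S_{2,2}$ (the only nontrivial cases, $B_1(0,1)$ and $B_1(1,1)$, via a short codegree count: fix the degree-$1$ and degree-$3$ vertices, then each successive common neighbor in at most $t-1$ ways), and cites \cite{ger6} for $\ex(n,S_{1,2},K_{2,t})$ and $\ex(n,S_{2,2},K_{2,t})=(1+o(1))\cN(S_{2,2},F(n,t))=\Theta(n^{7/2})$, so the $S_{2,2}$ term swamps all others; part \textbf{(iv)} is handled analogously with $P_4$ as the dominant book, using \cite{gerpal}. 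Finally, the tight-constant optimization of $\sum_{uv\in E(G)}d(u)^pd(v)^q$ under $\sum_v d(v)^2=O(n^2)$ that you invoke in your last paragraph is never carried out either; in the paper no such optimization is needed, precisely because everything is reduced to per-book generalized Tur\'an statements plus the $O(n^3)$ versus $\Theta(n^{7/2})$ separation.
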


Combining results from \cite{ger2} and \cite{dahi}, we also have that \textbf{(i)} holds for any $k$-chromatic graph with a color-critical edge in place of $K_k$, if $n$ is large enough. Moreover, combining these with results from \cite{ger3}, we have exact results for some other $k$-chromatic graphs as well. We do not state these results precisely for brevity.

We continue with a theorem where we need to prove new generalized Tur\'an results, which are interesting on its own. Let $A_0''$ denote the set of indices in $A_0'$ without any term with $d(u)^2d(v)^2$. The first and second Zagreb indices, the reduced second Zagreb index, the first hyper Zagreb index, the first reformulated Zagreb index, the first and second K Banhatti indices, and the generalized Zagreb index with $r+s\le 3$ belong here.

\begin{thm}\label{gentur}
\textbf{(i)} We have $\ex(n,B_1(1,1),K_4)=\cN(B_1(1,1),(T(n,3))$.

\textbf{(ii)} We have $\ex(n,S_{1,2},K_4)=\cN(S_{1,2},(T(n,3))$.

    \textbf{(iii)} For any index $Q\in A_0'$, we have $\ex_Q(n,K_4)=Q(T(n,3))$.
\end{thm}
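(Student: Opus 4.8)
The overall plan is to derive \textbf{(iii)} from \textbf{(i)}, \textbf{(ii)} and Proposition~\ref{main}, and to prove \textbf{(i)} and \textbf{(ii)} as new generalized Tur\'an results by showing that $T(n,3)$ simultaneously maximizes all of the relevant subgraph counts among $K_4$-free graphs. For \textbf{(iii)}, by Proposition~\ref{main} we may write $Q=\sum_{t,p,q}w_{t,p,q}\cN(B_t(p,q),G)$ with every $w_{t,p,q}\ge 0$. Since each term of an index in $A_0'$ has degree at most $2$ in each of $d(u)$ and $d(v)$, in the expansion of a monomial $d(u)^ad(v)^b$ (with $a,b\le 2$) the page and left-leaf vertices are neighbors of $u$ while the page and right-leaf vertices are neighbors of $v$; hence only books with $t+p\le 2$ and $t+q\le 2$ occur. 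Up to the symmetry $u\leftrightarrow v$ this is the finite list $P_2,P_3,P_4,K_3,K_{1,3},S_{1,2},S_{2,2},B_1(1,0),B_1(1,1),B_2(0,0)$. Because all coefficients are non-negative, it suffices to show that each book $B$ on this list is \emph{$K_4$-Tur\'an-good}, i.e.\ $\ex(n,B,K_4)=\cN(B,T(n,3))$; summing the inequalities $\cN(B,G)\le\cN(B,T(n,3))$ then gives $Q(G)\le Q(T(n,3))$. For $P_2,P_3,P_4,K_3$ this is recorded in the introduction; $K_{1,3}$ and the double star $S_{2,2}$ are covered by the known double-star counts alluded to there; and $B_1(1,0)$ and $B_2(0,0)=K_4-e$ are handled by the same method as \textbf{(i)}. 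The two genuinely new cases are therefore $B_1(1,1)$ and $S_{1,2}$, namely \textbf{(i)} and \textbf{(ii)}.

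To prove \textbf{(i)} and \textbf{(ii)}, for $H\in\{B_1(1,1),S_{1,2}\}$ I would combine Zykov-type symmetrization with a smoothing step. First I would show that some $K_4$-free maximizer of $\cN(H,\cdot)$ is complete multipartite: if $u\not\sim v$, then cloning the worse of the two vertices onto the better (deleting its edges and joining it to the other's neighborhood) keeps the graph $K_4$-free — a newly created $K_4$ would have to use the cloned vertex and hence force a triangle inside $N(u)$, producing a $K_4$ on $u$ already present — and, choosing the direction that does not decrease $\cN(H,\cdot)$, does not lower the count. Iterating makes non-adjacency an equivalence relation, so a maximizer is complete multipartite, and $K_4$-freeness limits it to at most three parts. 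Among complete $3$-partite graphs I would then move a single vertex from a larger part to a smaller one and check by direct computation that $\cN(H,\cdot)$ increases, which forces the balanced graph $T(n,3)$.

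The main obstacle is the cloning step: unlike for cliques (Zykov's setting), $B_1(1,1)$ and $S_{1,2}$ contain non-adjacent vertices, so a single copy of $H$ may use both $u$ and $v$, mapped to a non-edge of $H$. To push the argument through I would split the copies of $H$ by how many of $u,v$ they use, compare the copies through exactly one of them by an explicit rooted count that fixes the cloning direction, and verify that the copies through both do not reverse the resulting inequality. For $S_{1,2}$ this rests on the exact identity $\cN(S_{1,2},G)=\sum_{\vec{uv}\in\vec E}\big[(d(u)-1)\binom{d(v)-1}{2}-|N(u)\cap N(v)|\,(d(v)-2)\big]$, and for $B_1(1,1)$ on the analogous triangle-indexed expansion; in both, the $K_4$-free hypothesis keeps the common neighborhood of each edge independent, which controls the correction terms. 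Should monotonicity under cloning resist a clean proof, the fallback is to maximize these exact expansions directly over all $K_4$-free degree sequences, using that every neighborhood is triangle-free (Mantel) and that the Tur\'an degree sequence maximizes the resulting expression by convexity.
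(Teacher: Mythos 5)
Your reduction of \textbf{(iii)} to the Tur\'an-goodness of a finite list of books with non-negative weights is indeed the paper's strategy, and your exact identity for $\cN(S_{1,2},G)$ is correct; but your proof of the two new cases \textbf{(i)} and \textbf{(ii)} has a genuine gap at its core, namely the Zykov-type cloning step. For counting a graph $H$ that has non-adjacent vertices, the inequality $\max\{\cN(H,G_{u\to v}),\cN(H,G_{v\to u})\}\ge\cN(H,G)$ is false in general: the balanced blow-up of $C_5$ maximizes the number of $C_5$'s among triangle-free graphs, yet cloning preserves triangle-freeness and symmetrization can only terminate in a complete bipartite graph, which contains no $C_5$ at all, so along the way both cloning directions must strictly decrease the count. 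Hence ``choosing the direction that does not decrease $\cN(H,\cdot)$'' is not available for free; whether it works for $B_1(1,1)$ and $S_{1,2}$ is exactly the hard content, which you defer (``verify that the copies through both do not reverse the inequality''). Your fallback is not a proof either: the expansion you would optimize contains the negative codegree term $|N(u)\cap N(v)|(d(v)-2)$, degree sequences of $K_4$-free graphs cannot be prescribed freely, and a degree-sequence relaxation cannot deliver the exact, every-$n$ equality $\ex(n,S_{1,2},K_4)=\cN(S_{1,2},T(n,3))$. The paper avoids symmetrization entirely via two short double counts: for each triangle $uvw$ the number of extensions to $B_1(1,1)$ is at most $3a+2b+2c=2\binom{n-3}{2}+a$, and summing $a$ over triangles gives $\cN(K_1\vee P_4,G)$, whence $\cN(B_1(1,1),G)\le 2\binom{n-3}{2}\cN(K_3,G)+\cN(K_1\vee P_4,G)$; similarly, counting extensions of a $P_4$ by a fifth vertex gives $2\cN(S_{1,2},G)\le(n-4)\cN(P_4,G)+\cN(B_1(1,1),G)$. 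Every term on the right is already known to be maximized by $T(n,3)$ among $K_4$-free graphs (Zykov \cite{zykov}, Gy\H ori--Pach--Simonovits \cite{gypl}, \cite{ger,qxg}, plus part \textbf{(i)}), and both bounds are tight for $T(n,3)$, since there every vertex outside a triangle (resp.\ $P_4$) realizes the maximal extension count.

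A second, scope-related problem: your list for \textbf{(iii)} includes $S_{2,2}$ and $B_2(0,0)=K_4-e$, which you dismiss as ``covered by the known double-star counts'' and ``handled by the same method as \textbf{(i)}''. Neither claim is substantiated, and the paper deliberately defines $A_0''$ (no $d(u)^2d(v)^2$ term) immediately before the theorem precisely to exclude these books: its proof of \textbf{(iii)} treats only $B_t(p,q)$ with $2t+p+q\le 3$, i.e.\ $P_2,P_3,P_4,K_{1,3},S_{1,2},K_3,B_1(1,0)$, with $B_1(1,1)$ entering only as an auxiliary count, so the $A_0'$ in the statement is evidently a typo for $A_0''$. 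Whether $T(n,3)$ maximizes $\cN(S_{2,2},\cdot)$ among $K_4$-free graphs is not in the cited literature, and the author's own remarks indicate that $S_{a,a}$ versus cliques is delicate (for large $a$ the balanced Tur\'an graph is not even optimal among complete $3$-partite graphs). Under your literal reading of the statement you therefore owe two further, genuinely open generalized Tur\'an results; under the intended reading your list should shrink to the one above.
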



\smallskip

In the triangle-free case, we can extend the ideas used for counting double stars to many other indices.
A function $f(x,y)$ is \textit{monotone} if $x\le x'$ and $y\le y'$ implies $f(x,y)\le f(x',y')$. We say that a topological index is \textit{monotone increasing} if it can be written as $\sum_{uv\in E(G)}f(d(u),d(v))$ for a monotone function $f(x,y)$. Let $A_1$ denote the set of monotone increasing indices. Clearly $A\subset A_1$. Observe that the above definition is symmetric in the sense that for every edge $uv$ we have $f(d(u),d(v))$ and $f(d(v),d(u))$ in the sum.
\begin{thm}\label{trif}
    For any monotone increasing topological index $Q$, $\ex_Q(n,K_3)=Q(K_{m,n-m})$ for some $m$.
\end{thm}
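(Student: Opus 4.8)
The plan is to establish the matching upper bound $\ex_Q(n,K_3)\le \max_{m} Q(K_{m,n-m})$; the reverse inequality is immediate since every $K_{m,n-m}$ is triangle-free. Throughout I may assume $f$ is symmetric (replacing $f(x,y)$ by $\tfrac12(f(x,y)+f(y,x))$ preserves monotonicity and the value of $Q$) and nonnegative. The starting point is that in a triangle-free graph the neighborhoods of two adjacent vertices are disjoint, so every edge $uv$ satisfies $d(u)+d(v)\le n$. Writing $m_{uv}=\min(d(u),d(v))$, the larger endpoint has degree at most $n-m_{uv}$, so by monotonicity $f(d(u),d(v))\le f(m_{uv},\,n-m_{uv})$.

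Next I would introduce the constant $C=\max_{1\le m\le n-1} m(n-m)\,f(m,n-m)$, which equals $\max_m Q(K_{m,n-m})$ because $K_{m,n-m}$ has $m(n-m)$ edges, each joining a vertex of degree $m$ to one of degree $n-m$. By the very definition of $C$, the term with $m=m_{uv}$ gives $m_{uv}(n-m_{uv})\,f(m_{uv},n-m_{uv})\le C$, hence $f(m_{uv},n-m_{uv})\le C/(m_{uv}(n-m_{uv}))$. Combining this with the edgewise bound above, $Q(G)=\sum_{uv\in E(G)}f(d(u),d(v))\le C\sum_{uv\in E(G)}\frac{1}{m_{uv}(n-m_{uv})}$. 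Thus the entire problem reduces to the purely combinatorial inequality that for every triangle-free graph $\sum_{uv\in E(G)}\frac{1}{m_{uv}(n-m_{uv})}\le 1$, which is sharp for complete bipartite graphs.

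I expect this last inequality to be the main obstacle, and here is the argument I would use for it. Orient every edge from its lower-degree endpoint to its higher-degree endpoint (breaking ties by a fixed vertex ordering), and let $d^+(w)$ and $d^-(w)$ be the resulting out- and in-degrees, so that $d^+(w)+d^-(w)=d(w)$. Using the partial-fractions identity $\frac{1}{m(n-m)}=\frac1n\bigl(\frac1m+\frac1{n-m}\bigr)$, the sum becomes $\frac1n\sum_{uv\in E(G)}\bigl(\frac{1}{m_{uv}}+\frac{1}{n-m_{uv}}\bigr)$. For an oriented edge $w\to z$ one has $m_{uv}=d(w)$, so $\sum\frac{1}{m_{uv}}=\sum_w \frac{d^+(w)}{d(w)}$; and since triangle-freeness gives $n-d(w)\ge d(z)$, we get $\frac{1}{n-m_{uv}}\le\frac{1}{d(z)}$, whence $\sum\frac{1}{n-m_{uv}}\le\sum_z\frac{d^-(z)}{d(z)}$. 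Adding the two estimates gives $\sum_w\frac{d^+(w)+d^-(w)}{d(w)}=\#\{w:d(w)>0\}\le n$, so the whole sum is at most $1$.

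Assembling the pieces yields $Q(G)\le C=\max_m Q(K_{m,n-m})$ for every triangle-free $G$, and taking $G=K_{m,n-m}$ at the optimal $m$ shows the bound is attained, which is exactly the assertion. The delicate point is the weighted edge inequality of the third paragraph: the naive simplification $n-d(w)\ge d(z)\Rightarrow \frac{1}{m_{uv}(n-m_{uv})}\le\frac{1}{d(u)d(v)}$ is too lossy, since $\sum_{uv}\frac{1}{d(u)d(v)}$ can already exceed $1$ (for instance on $P_4$). It is therefore essential to keep the factor $n-m_{uv}$ intact and to exploit the exact cancellation $d^+(w)+d^-(w)=d(w)$ produced by the orientation.
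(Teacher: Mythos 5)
Your proposal is correct, but it follows a genuinely different route from the paper's proof. Both arguments start from the same triangle-free fact that the neighborhoods of adjacent vertices are disjoint, so $d(u)+d(v)\le n$ for every edge $uv$; from there they diverge. The paper argues uniformly: it splits into the cases $\Delta<n/2$ and $\Delta\ge n/2$, picks $\Delta'\in[n/2,\Delta]$ maximizing $f(i,n-i)$, bounds every edge term by the single quantity $f(\Delta',n-\Delta')$, and separately bounds the number of edges by $\Delta(n-\Delta)\le \Delta'(n-\Delta')$ using the independent neighborhood of a maximum-degree vertex, multiplying the two bounds to get $Q(G)\le Q(K_{\Delta',n-\Delta'})$. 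You instead distribute the target value $C=\max_m Q(K_{m,n-m})$ edge by edge, reducing the theorem to the sharp weighted inequality $\sum_{uv\in E(G)}\frac{1}{m_{uv}(n-m_{uv})}\le 1$ (with $m_{uv}=\min(d(u),d(v))$), which you prove by the orientation and partial-fractions double count; this lemma has no counterpart in the paper and is of independent interest, being tight for every complete bipartite graph via the exact identity $d^+(w)+d^-(w)=d(w)$. Your route is case-free and uses only the local condition $d(u)+d(v)\le n$ on edges, so it in fact proves the statement for the larger class of graphs satisfying that condition, not merely triangle-free ones; your closing remark correctly identifies why the naive bound $\frac{1}{m_{uv}(n-m_{uv})}\le\frac{1}{d(u)d(v)}$ would be too lossy. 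What the paper's cruder two-step bound buys is reusability: the explicit maximizer $\Delta'$ and the edge-count bound $|E(G)|\le\Delta'(n-\Delta')$ are precisely the ingredients on which the stability Lemma \ref{trifstab} is built (via Lemma \ref{deltaersim}), whereas extracting stability from your weighted inequality would require a separate analysis of near-equality in the orientation argument. One small caveat: nonnegativity of $f$ is a genuine hypothesis rather than a WLOG (you need $C\ge 0$ when multiplying $C$ by a sum that is at most $1$), but the paper's proof implicitly uses $f(\Delta',n-\Delta')\ge 0$ at the analogous step and all indices considered are nonnegative, so nothing is lost; your symmetrization of $f$ likewise just makes explicit the convention the paper states before the theorem.
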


We will use a stability version of the above theorem. 

\begin{lemma}\label{trifstab}
     For any index $Q$ in $A_1$, and any $\varepsilon>0$, there exists $\delta>0$ such that if $Q(G)\ge (1-\delta)\ex_Q(n,K_3)$ for an $n$-vertex triangle-free graph $G$, then $G$ can be turned to a complete bipartite graph by adding and removing at most $\epsilon n^2$ edges.
\end{lemma}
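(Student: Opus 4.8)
The plan is to prove the lemma by analysing the equality case of a single exact upper bound for $Q$ on triangle-free graphs that is tight precisely on complete bipartite graphs. First I would record the basic consequence of triangle-freeness: for every edge $uv$ the neighbourhoods $N(u)$ and $N(v)$ are disjoint, so $d(u)+d(v)\le n$, and in particular $H:=\{v:d(v)>n/2\}$ is an independent set. Writing $Q(G)=\sum_{uv\in E(G)}f(d(u),d(v))$ with $f$ symmetric and monotone, the inequality $d(v)\le n-d(u)$ and monotonicity give $f(d(u),d(v))\le f(d(u),n-d(u))$ and, symmetrically, $f(d(u),d(v))\le f(d(v),n-d(v))$. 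Averaging these two bounds and summing over all edges yields
\[
Q(G)\;\le\;\sum_{v\in V(G)}\psi(d(v)),\qquad \psi(d):=\tfrac12\,d\,f(d,n-d).
\]
Because every edge of $K_{m,n-m}$ satisfies $d(u)+d(v)=n$, this is an equality on complete bipartite graphs, so there $\sum_v\psi(d(v))=Q(K_{m,n-m})$, whose maximum over $m$ equals $\ex_Q(n,K_3)$ by Theorem~\ref{trif}.

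The second ingredient is a matching upper bound $\sum_v\psi(d(v))\le(1+o(1))\ex_Q(n,K_3)$ for every triangle-free $G$, together with its stability. Here I would exploit that $H$ is independent: each $v\in H$ has all its neighbours outside $H$, hence $d(v)\le n-|H|$, while every vertex outside $H$ has degree at most $n/2$. The Andr\'asfai--Erd\H os--S\'os theorem and its robust refinements then force the degree sequence of a near-maximiser to concentrate on two values $m$ and $n-m$, matching the profile of $K_{m,n-m}$. Combining this with the first display, the hypothesis $Q(G)\ge(1-\delta)\ex_Q(n,K_3)$ squeezes $\sum_v\psi(d(v))$ to within $\delta\,\ex_Q(n,K_3)$ of its maximum and simultaneously forces the total slack
\[
\sum_{uv\in E(G)}\Big(\tfrac12 f(d(u),n-d(u))+\tfrac12 f(d(v),n-d(v))-f(d(u),d(v))\Big)\;\le\;\delta\,\ex_Q(n,K_3).
\]
The first conclusion pins the two degree classes near sizes $m$ and $n-m$, while the second forces $d(u)+d(v)=n-o(n)$ for all but a negligible weighted fraction of the edges.

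Finally I would take the bipartition $(X,Y)$ with $X=H$ and convert the two pieces of information into an edge count: the vertices of $X$ have degree close to $n-m$ and those of $Y$ degree close to $m$, almost every edge runs between $X$ and $Y$, and triangle-freeness forbids all but $o(n^2)$ edges inside the parts. Hence all but $\varepsilon n^2$ of the cross pairs are present and all but $\varepsilon n^2$ of the within-part pairs are absent, which is exactly the statement that $G$ becomes complete bipartite after at most $\varepsilon n^2$ edge edits.

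The main obstacle is the second ingredient, and inside it the possibility that $f$ is flat. When $f$ is not strictly increasing, near-tightness of the weighted inequality no longer forces $d(u)+d(v)\approx n$, and moreover the extremal $K_{m,n-m}$ can be far from balanced; in that regime the degree-based slack carries essentially no information, and the bipartition must instead be located by a Mantel-- and Andr\'asfai--Erd\H os--S\'os--type edge-counting stability argument applied to the high-degree independent set $H$. Reconciling the weighted and the purely combinatorial mechanisms into a single $\delta$ that works uniformly in $n$ for every monotone $f$ is the delicate point.
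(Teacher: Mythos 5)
Your first inequality, $Q(G)\le\sum_{v}\psi(d(v))$ with $\psi(d)=\tfrac12\,d\,f(d,n-d)$, is correct (it follows from $d(u)+d(v)\le n$ and monotonicity, and is tight on complete bipartite graphs), but everything after it rests on an unproven claim, and the tool you invoke cannot prove it. Your ``second ingredient'' asserts that $\sum_v\psi(d(v))\le(1+o(1))\ex_Q(n,K_3)$ for every triangle-free $G$, with a stability version forcing the degree sequence of a near-maximiser to concentrate on two values $m$ and $n-m$. The Andr\'asfai--Erd\H os--S\'os theorem says nothing of this kind: it states that a triangle-free graph of minimum degree above $2n/5$ is bipartite. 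Here the extremal graph itself can violate its hypothesis: for a monotone $f$ such as (a smoothed version of) $\mathbb{1}[\max(x,y)\ge 0.9n]$, the optimum $K_{m,n-m}$ is very unbalanced and has minimum degree $m\ll 2n/5$, and near-extremal graphs can have even smaller degrees. Moreover your final step takes the bipartition $(X,Y)$ with $X=H=\{v:d(v)>n/2\}$; when the optimum is balanced (e.g.\ $f\equiv 1$, where the extremal graph is $K_{\lfloor n/2\rfloor,\lceil n/2\rceil}$), $H$ is empty or negligible, so this extraction produces no bipartition at all. Finally, you yourself concede in the last paragraph that for flat $f$ the slack inequality ``carries essentially no information'' and that a separate, unspecified edge-counting argument is needed --- but that unspecified argument is precisely the content of the lemma, so the proposal is incomplete at its crucial point. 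Note also that a lower bound on $|E(G)|$ (needed for ``all but $\varepsilon n^2$ of the cross pairs are present'') does not follow from your averaged inequality without the missing concentration claim.

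The paper's proof avoids degree profiles entirely by a cruder but decisive device: it splits on the maximum degree $\Delta$. If $\Delta<n/2$, every edge term is at most $f(\lfloor n/2\rfloor,\lceil n/2\rceil)$ by monotonicity, so near-extremality of $Q$ forces $|E(G)|\ge n^2/4-\delta' n^2$ and the Erd\H os--Simonovits stability theorem finishes. If $\Delta\ge n/2$, it picks $\Delta'\in[n/2,\Delta]$ maximizing $f(i,n-i)$, bounds \emph{every} edge term uniformly by $f(\Delta',n-\Delta')$ and the edge count by $|E(G)|\le\Delta(n-\Delta)\le\Delta'(n-\Delta')$; then $Q(G)\ge(1-\delta)\ex_Q(n,K_3)$ forces $|E(G)|\ge(1-\delta)\Delta'(n-\Delta')$, and Lemma~\ref{deltaersim} (an edge-stability lemma for triangle-free graphs with large maximum degree) yields the bipartite structure. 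This uniform per-edge bound reduces the weighted problem to pure edge stability and works seamlessly for flat $f$ and for unbalanced optima --- exactly the regimes where your approach stalls. To salvage your route you would have to prove the profile bound $\sum_v\psi(d(v))\le(1+o(1))\ex_Q(n,K_3)$ together with its stability for arbitrary monotone $f$, which appears at least as hard as the lemma itself.
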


Now our goal is to extend this result to other 3-chromatic graphs. However, we have to place more restrictions on $f$. 
Let $A_1'$ denote the set of indices in $A_1$ with the additional property that $f$ is a polynomial. Then we have $A_0\subset A\subset A_1'\subset A_1$.

\begin{lemma}\label{ezis}
         For any index $Q$ in $A_1'$, any graph $F$ with chromatic number 3 and any $\varepsilon>0$, there exists $\delta>0$ such that if $Q(G)\ge (1-\delta)\ex_Q(n,F)$ for an $n$-vertex $F$-free graph $G$, then $G$ can be turned to a complete bipartite graph by adding and removing at most $\epsilon n^2$ edges.
\end{lemma}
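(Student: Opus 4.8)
The plan is to reduce Lemma~\ref{ezis} to the triangle-free stability result Lemma~\ref{trifstab} by deleting a negligible number of edges, exploiting that a $3$-chromatic $F$ forces an $F$-free graph to be essentially triangle-free. First I would record the easy lower bound $\ex_Q(n,F)\ge \ex_Q(n,K_3)$: since $\chi(F)=3$, every complete bipartite graph is bipartite and hence $F$-free, so the extremal triangle-free graph $K_{m,n-m}$ from Theorem~\ref{trif} is an admissible $F$-free competitor, giving $\ex_Q(n,F)\ge Q(K_{m,n-m})=\ex_Q(n,K_3)$. Thus the hypothesis $Q(G)\ge(1-\delta)\ex_Q(n,F)$ already yields $Q(G)\ge(1-\delta)\ex_Q(n,K_3)$, and it remains to pass from the $F$-free graph $G$ to a nearby triangle-free graph without losing much $Q$-value, so that Lemma~\ref{trifstab} applies.

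Two ingredients drive this passage. First, supersaturation: any $F$ with $\chi(F)=3$ embeds into the complete tripartite graph with parts of size $t=|V(F)|$ (send each color class into one part), so an $F$-free graph contains no such tripartite graph; viewing the triangles of $G$ as a $3$-uniform hypergraph and invoking Erdős's supersaturation theorem for complete $3$-partite $3$-graphs (a complete $3$-partite sub-hypergraph on parts $A,B,C$ of size $t$ is exactly such a tripartite subgraph of $G$), one gets that $G$ has at most $\gamma n^3$ triangles for any prescribed $\gamma>0$ once $n$ is large. The triangle removal lemma then lets me delete at most $\epsilon' n^2$ edges from $G$ to obtain a triangle-free graph $G'$, with $\epsilon'\to 0$ as $\gamma\to 0$. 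Second, a weight-perturbation estimate: writing $D$ for the degree of the polynomial $f$, on $[0,n]^2$ one has $|f|\le C n^D$ and $|\nabla f|\le C n^{D-1}$. Deleting a set $R$ of at most $\epsilon' n^2$ edges loses the terms of the deleted edges, of total size $O(\epsilon' n^2\cdot n^D)$, and perturbs the surviving terms through the degree changes by at most $O(n^{D-1})\cdot n\cdot\sum_v(d_G(v)-d_{G'}(v))=O(n^{D-1}\cdot n\cdot \epsilon' n^2)$, using $\sum_v(d_G(v)-d_{G'}(v))=2|R|$. Since $\ex_Q(n,K_3)=\Theta(n^{D+2})$ (it is at least $Q(K_{\lfloor n/2\rfloor,\lceil n/2\rceil})$, which has this order for a monotone increasing polynomial $f$), both contributions are bounded by $C'\epsilon'\,\ex_Q(n,K_3)$ for a constant $C'$ depending only on $f$.

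Given $\varepsilon>0$ I would then run the parameter chase. Set $\epsilon''=\varepsilon/2$ and let $\delta_0$ be the constant supplied by Lemma~\ref{trifstab} for $\epsilon''$. Choose $\epsilon'$ small enough that $\epsilon'\le \varepsilon/2$ and $C'\epsilon'\le\delta_0/2$, fix the corresponding $\gamma$ from the removal lemma, and set $\delta=\delta_0/2$. Then $Q(G')\ge Q(G)-\tfrac{\delta_0}{2}\ex_Q(n,K_3)\ge(1-\delta-\tfrac{\delta_0}{2})\ex_Q(n,K_3)=(1-\delta_0)\ex_Q(n,K_3)$. As $G'$ is triangle-free, Lemma~\ref{trifstab} turns it into a complete bipartite graph within $\epsilon'' n^2=\tfrac{\varepsilon}{2}n^2$ edge edits; together with the at most $\epsilon' n^2\le\tfrac{\varepsilon}{2}n^2$ deletions used to produce $G'$, the graph $G$ is within $\varepsilon n^2$ edits of a complete bipartite graph, as required (for $n$ large).

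The step I expect to be the main obstacle is the weight-perturbation estimate: deleting edges not only erases their own weights but shifts degrees and hence perturbs the weights $f(d(u),d(v))$ on \emph{all} surviving edges, and controlling this aggregate distortion by $o(\ex_Q(n,K_3))$ is precisely where the polynomial hypothesis defining $A_1'$ enters. A merely monotone $f$ (as permitted in $A_1$) could jump sharply when a degree drops from about $n/2$ to $n/2-1$, so the bounded-gradient property of polynomials is essential; this explains why the lemma is stated for $A_1'$ rather than $A_1$. A secondary point requiring care is the sequencing of constants, since $\gamma\to 0$ forces $\epsilon'\to 0$ through the removal lemma and this must be made small against the fixed threshold $\delta_0$ coming from Lemma~\ref{trifstab}, so the constants must be chosen in the order $\varepsilon\mapsto(\epsilon'',\delta_0)\mapsto\epsilon'\mapsto(\gamma,\delta)$ to avoid circularity.
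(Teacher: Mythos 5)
Your proposal is correct and follows essentially the same route as the paper's proof: deduce $\ex_Q(n,F)\ge\ex_Q(n,K_3)$ from bipartite graphs being $F$-free, apply the triangle removal lemma to pass to a triangle-free $G'$, bound the loss in $Q$ by $O(\epsilon' n^{d+2})$ against $\ex_Q(n,K_3)=\Omega(n^{d+2})$ using the polynomial hypothesis (the paper packages this as Lemma~\ref{ujlem}, you via a gradient bound), and finish with Lemma~\ref{trifstab}. In fact your write-up is slightly more complete than the paper's, since you explicitly justify via supersaturation that an $F$-free graph has $o(n^3)$ triangles (which the removal lemma silently requires) and you sequence the constants carefully.
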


\begin{corollary}\label{corro}
For any index $Q$ in $A_1'$ and any graph $F$ with chromatic number 3 and a color-critical edge, we have that $\ex_Q(n,F)=Q(K_{m,n-m})$ for some $m$, provided $n$ is sufficiently large.
\end{corollary}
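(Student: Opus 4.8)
The plan is to deduce the corollary from the stability Lemma~\ref{ezis} by a standard stability-to-exactness argument, using the color-critical edge to forbid edges inside the parts and using monotonicity together with maximality to fill in the remaining cross edges. First I would record the easy lower bound: since $\chi(F)=3$, every bipartite graph is $F$-free, so in particular each $K_{m,n-m}$ is $F$-free and $\ex_Q(n,F)\ge \max_m Q(K_{m,n-m})$. It therefore suffices to exhibit an extremal $F$-free graph $G$ (one attaining $\ex_Q(n,F)$, which exists by finiteness) that is complete bipartite. A convenient first observation is that, because $f$ is monotone increasing and positive on positive degrees, adding any edge strictly increases $Q$; hence $G$ is automatically a maximal (saturated) $F$-free graph, i.e.\ no edge can be added without creating a copy of $F$. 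Applying Lemma~\ref{ezis} with a small parameter $\varepsilon$ (chosen at the end, with $n$ large) yields a partition $V(G)=A\cup B$ in which the number of edges inside $A$ or inside $B$, plus the number of non-edges between $A$ and $B$, is at most $\varepsilon n^2$. Call a vertex \emph{rich} if it sends edges to all but $o(n)$ of the opposite part and has only $o(n)$ neighbours in its own part; a counting of bad pairs shows all but $o(n)$ vertices are rich.

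The decisive structural fact about $F$ is that, as $e=xy$ is color-critical, $F-e$ is bipartite, and in any proper $2$-colouring of $F-e$ the vertices $x$ and $y$ receive the \emph{same} colour (otherwise that colouring would properly $2$-colour $F$, contradicting $\chi(F)=3$). Using this I would show that $G$ has no edge inside a part. Suppose $uv$ is an edge with $u,v\in A$ and both $u,v$ rich. Map $e$ to $uv$, the color class of $F-e$ containing $x,y$ into $A$, and the other class into $B$; then every edge of $F-e$ becomes a pair between $A$ and $B$. Since $u,v$ and the few auxiliary vertices used each miss only an $o(1)$-fraction of the opposite part, their relevant common neighbourhoods have size $\gg |V(F)|$, so the embedding can be completed greedily, producing a copy of $F$ in $G$ — a contradiction. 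Thus no edge of $G$ joins two rich vertices in the same part. Once it is known that \emph{every} vertex is rich (the next step), this rules out all within-part edges, so $G$ is bipartite with parts $A,B$; maximality then forces every pair between $A$ and $B$ to be an edge (adding a missing cross edge keeps the graph bipartite, hence $F$-free), whence $G=K_{|A|,|B|}$. Optimising over $m=|A|$ gives $\ex_Q(n,F)=\max_m Q(K_{m,n-m})=Q(K_{m,n-m})$ for the optimal $m$, matching the lower bound.

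To upgrade ``most vertices rich'' to ``all rich'' I would use extremality through a re-insertion (cloning) move. Given a non-rich $w$, say $w\in A$, replace its neighbourhood by the set of all rich vertices of $B$. This move preserves $F$-freeness: any new copy of $F$ must use $w$, all of whose neighbours now lie among rich $B$-vertices, and replacing $w$ by a rich $A$-vertex $w^\ast$ outside the copy and adjacent to those (boundedly many) neighbours — such $w^\ast$ exists since rich vertices are abundant and each rich $B$-vertex misses only $o(n)$ of $A$ — yields a copy of $F$ already in $G-w\subseteq G$, using that copies need not be induced. One then compares $Q$ before and after: the move creates $\Omega(n)$ new cross edges to high-degree (rich) endpoints, and, writing $D$ for the degree of the leading monomial of $f$, this contributes a gain of order $n\cdot n^{D}$, while monotonicity guarantees the degree increases of the rich $B$-vertices can only help.

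The main obstacle is precisely the $Q$-accounting in this last step for non-rich vertices that carry \emph{many} edges inside their own part. Deleting those same-part edges costs, in the crude estimate, up to $\Omega(n^{D+1})$, the same order as the gain, so the naive gain-minus-loss bound is not conclusive; this is exactly the regime where one must argue more carefully (for instance by a symmetrisation that moves the entire part onto a single rich neighbourhood, or by a finer term-by-term estimate), and it is here that the hypothesis that $f$ is a polynomial — the distinction between $A_1'$ and the merely monotone class $A_1$ — is used to control the differences of $f$-values under unit degree changes. By contrast, the color-critical embedding and the maximality-based fill-in are comparatively routine once every vertex has been shown to be rich, and the requirement that $n$ be large enters only to make the $\varepsilon n^2$ bad pairs negligible and to leave room for embedding the fixed graph $F$.
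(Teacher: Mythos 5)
You have correctly assembled the outer shell of the paper's argument: the lower bound from bipartite graphs being $F$-free, the application of Lemma~\ref{ezis} to an extremal graph, the key observation that for a color-critical edge $e=xy$ the graph $F-e$ is bipartite with $x$ and $y$ in the same color class, and a final fill-in of the missing cross edges. But the step you yourself flag as the ``main obstacle'' is a genuine gap, not a routine refinement, and it is exactly the point the paper's proof is designed to get around. Your per-vertex cloning move pits a gain of order $n^{d+1}$ against a potential loss of order $n^{d+1}$ (a non-rich vertex may carry $\Theta(n)$ same-part edges whose deletion costs $\Theta(n^d)$ each, by Lemma~\ref{ujlem}), with no control over the constants, and neither of your suggested fixes is carried out. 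The paper never tries to make vertices rich one at a time. Instead it (i) chooses the partition $A\cup B$ minimizing the number of within-part edges, so that every vertex has at least as many cross neighbors as same-part neighbors; (ii) takes an \emph{arbitrary} within-part edge $uv$ (not just a rich--rich one) and applies the K\H ov\'ari--S\'os--Tur\'an bound \cite{kst} to $A$ and the common neighborhood $B'$ of $u,v$ in $B$: a $K_{|V(F)|,|V(F)|}$ there would complete a copy of $F$ via your same-color-class observation, so $|B'|=o(n)$, whence either both endpoints miss $\Omega(n)$ cross edges, or one endpoint has $o(n)$ cross edges and hence (by the minimality of the partition) $o(n)$ total degree. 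This quantitative dichotomy classifies the endpoints of \emph{all} within-part edges into $A_1$ (total degree $o(n)$) and $A_2$ (missing $\Omega(n)$ cross edges, with $|A_2|=o(n)$ because only $o(n^2)$ cross edges are missing), after which a single global modification --- delete all within-part edges, add all missing cross edges --- has aggregate loss $o(|A_1|n^{d+1})+O(|A_2|^2n^d)$ against aggregate gain $\Omega(|A_1|n^{d+1}+|A_2|n^{d+1})$, since once a low-degree vertex passes cross-degree $|B|/2$ each further edge gains $\Theta(n^d)$ by the ``moreover'' part of Lemma~\ref{ujlem}. The gain attached to a vertex is tied to the very quantity that bounds its loss, which is what breaks the $\Theta(n^{d+1})$-versus-$\Theta(n^{d+1})$ standoff in your accounting. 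Your exclusion of rich--rich within-part edges is the qualitative shadow of this KST step, but restricted to rich vertices it cannot feed the extremal comparison.

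Two smaller problems: your claim that adding any edge \emph{strictly} increases $Q$ (hence that the extremal graph is saturated) is unjustified for general $Q\in A_1'$ --- e.g.\ $f(x,y)=(x-1)(y-1)$ contributes $0$ on pendant edges --- though this is harmless, since weak monotonicity lets one compare $G$ directly with $K_{|A|,|B|}$ once $G$ is known to be bipartite. Also, your notion of ``rich'' and all $\Omega(n)$ gain estimates silently assume both parts have size $\Omega(n)$; the paper disposes of the degenerate case $|A|=o(n)$ or $|B|=o(n)$ first, noting that then $G$ has $o(n^2)$ edges and $Q(G)=o(n^{d+2})<Q(T(n,2))$, a contradiction with extremality. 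You would need the same preliminary step before any of your density arguments apply.
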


Note that the above statement does not hold for every index in $A_1$. If for example $f$ has a large jump at $x+y=n+1$, it is possible $f(2,n-1)>n^2f(i,n-i)$ for every $i$ and then the $C_5$-free graph $B_1(n-3,0)$ (a triangle with leaves attached to one of the vertices) has larger $Q$-value than any complete bipartite graph $K_{i,n-i}$. One can easily see that Lemma \ref{ezis} also may fail in this case.

\smallskip
We have mentioned that Xu \cite{xu} determined the $n$-vertex graph $G$ with clique number $k$ that has the largest $M_2(G)$. He also determined the connected $n$-vertex graph $G'$ with clique number $k$ with the smallest $M_2(G')$ (obviously for disconnected graph we would not add any more edge to $K_k$). It is the \textit{kite graph} $Ki(n,k)$, which consists of a copy of $K_k$ and a path $P_{n-k}$ with an endpoint connected to a vertex of the clique.

We extend this result to every index in $A$ in the case $k>2$. Furthermore, we study the case the presence of another graph $H$ is known. 

\begin{thm}\label{klik}
    For any integers $t,p,q,k,n$, the connected $n$-vertex graph with clique number $k$ that contains the least number of copies of $B_t(p,q)$ is the kite graph $Ki(n,k)$, except for $k=2, t=0, p=q=1$, where the star $S_n$ contains the least number of copies of $S_{a,b}=P_4$.

\end{thm}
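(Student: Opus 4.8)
The plan is to prove the statement by reducing to edge-minimal graphs and then exploiting the resulting rigid structure. First I would argue that an extremal graph may be taken to be edge-minimal: since deleting an edge never increases $\cN(B_t(p,q),\cdot)$, any connected $n$-vertex graph $G$ of clique number $k$ can be reduced, by successively deleting edges whose removal preserves both connectivity and clique number $k$, to an edge-minimal graph $G^\ast$ with $\cN(B_t(p,q),G^\ast)\le \cN(B_t(p,q),G)$. As the kite is itself edge-minimal, it then suffices to prove the lower bound $\cN(B_t(p,q),G^\ast)\ge \cN(B_t(p,q),Ki(n,k))$ for edge-minimal $G^\ast$; combining gives $\cN(B_t(p,q),G)\ge \cN(B_t(p,q),G^\ast)\ge \cN(B_t(p,q),Ki(n,k))$ for all $G$. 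In an edge-minimal graph every edge is either a bridge or lies in every maximum clique. For $k\ge 3$ a clique edge lies in a triangle, hence is no bridge and so must lie in every maximum clique; this forces the maximum clique $Q\cong K_k$ to be unique and all remaining edges to be bridges, so $G^\ast$ is $K_k$ with a forest attached at its vertices. For $k=2$ the same argument shows $G^\ast$ is a tree.

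Next, for $t\ge 1$ (which forces $k\ge 3$, since then $B_t(p,q)$ contains a triangle while the triangle-free kite contains no such copy) I would localize the count. Because bridges have no common neighbors, only edges inside $Q$ can carry page vertices; hence in every copy the two rootlets and all $t$ pages lie in $Q$, and each leaf is a neighbor of a clique vertex, so it lies in $Q$ or is a tree-vertex attached directly to $Q$. Thus $\cN(B_t(p,q),G^\ast)$ depends only on the numbers $a_1,\dots,a_k$ of tree-vertices attached directly to the clique vertices, and it is monotone increasing in each $a_i$, since enlarging the neighborhood of a rootlet only enlarges the set of admissible leaf choices. Connectivity with $n>k$ forces $\sum_i a_i\ge 1$, and by the symmetry of $K_k$ every configuration with $\sum_i a_i=1$, in particular the kite, attains the same minimal value; this settles the case $t\ge 1$.

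The remaining and most delicate case is $t=0$, where $B_0(p,q)=S_{p,q}$ is a double star reaching distance two into the attached forest, so the forest shape genuinely matters. Here I would count double stars directly: summing over rootlet edges, each contribution is a product of falling-factorial-type terms in the degrees of the two endpoints, so the total is minimized by keeping all tree-degrees as small as possible, i.e.\ by attaching a single path, which is exactly the kite. The $K_k$ part contributes a constant, and for $k\ge 3$ the clique already forces copies of every $S_{p,q}$, so no degeneracy occurs. The genuine exception arises only at $k=2$, where $G^\ast$ is a tree and $S_{1,1}=P_4$: among trees the star $S_n$ is the unique one of diameter at most two, hence the only $P_4$-free one, so it beats the path, whereas for every other $(p,q)$ the path still minimizes $\cN(S_{p,q},\cdot)$ over trees.

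The main obstacle I anticipate is this $t=0$ analysis: making rigorous that among all forests (respectively trees) on the prescribed number of vertices the path minimizes the number of double stars $S_{p,q}$, carefully accounting for the overlaps between the leaf sets of the two rootlets in the non-induced count, and isolating precisely the pair $(p,q)=(1,1)$ at $k=2$ as the sole case where the path fails to be optimal.
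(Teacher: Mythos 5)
Your reduction to edge-minimal graphs is sound and matches what the paper uses implicitly (its $n=k+2$ case analysis is exactly over the edge-minimal graphs $K_k$ plus a hanging forest), and your $t\ge 1$ localization argument is correct and in fact cleaner than the paper's treatment of that subcase: since a bridge has no common neighbors, the rootlet edge and all pages must lie in the unique maximum clique $Q$, the count depends only on the numbers $a_1,\dots,a_k$ of forest-vertices attached directly to $Q$, and monotonicity plus $\sum_i a_i=1$ for the kite settles it. Your $k=2$ analysis is also fine, since for trees the path has zero copies of every $S_{p,q}$ with $\max(p,q)\ge 2$, and the $(p,q)=(1,1)$ exception follows from the star being the only tree of diameter at most two.

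However, the case $t=0$, $k\ge 3$ --- which you yourself flag as ``the main obstacle'' --- is a genuine gap, and it is precisely where all the work in the paper's proof lies. Your argument there (``each edge contributes a product of falling-factorial-type terms in the degrees, so the total is minimized by keeping all tree-degrees as small as possible, i.e.\ by attaching a single path'') is not a proof: the per-edge contributions cannot be minimized simultaneously, since the $n-k$ forest vertices must be attached somewhere, and attaching the path raises one clique vertex's degree to $k$ while alternative configurations trade this off differently (several attachment points versus one, star-shaped versus path-shaped hanging trees). That this trade-off resolves in the kite's favor is exactly the content of the theorem, and it is genuinely delicate: the paper proceeds by induction on $n$, where the base case $n=k+2$ requires comparing $Ki(k+2,k)$ against the two other edge-minimal graphs (two leaves on the same or on distinct clique vertices), and the subcase $S_{1,q}$ with $q=k-1$ is tight --- the kite gains exactly one new copy and the competitors are shown to gain at least one, by an ad hoc embedding. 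The induction step for $n\ge k+3$ then uses that extending the kite's path creates exactly one new copy each of $P_2,P_3,P_4$ and no other generalized book, while adding a leaf to any connected graph creates at least one new $P_2$ and $P_3$, and at least one new $P_4$ unless the host is a star (impossible for $k>2$). To complete your approach you would need to supply an argument of this kind --- e.g.\ a leaf-removal induction within your clique-plus-forest class, or a compression/local-move argument over hanging forests --- since the pointwise degree-minimization heuristic does not by itself rule out the competing configurations, including the near-tight ones at $q=k-1$.
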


\begin{corollary}
    For every index $Q\in A$, if $G$ is an $n$-vertex connected graph with clique number $k>2$, then $Q(G)\ge Q(Ki(n,k))$.
\end{corollary}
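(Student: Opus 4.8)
The plan is to read off this statement directly from Theorem~\ref{klik}, using the definition of the class $A$ together with the non-negativity of the weights. First I would unwind what $Q\in A$ means: by definition there are finitely many triples $(t,p,q)$ and coefficients $w_{t,p,q}\ge 0$ with
$$Q(H)=\sum_{t,p,q} w_{t,p,q}\,\cN(B_t(p,q),H)$$
for every graph $H$; in particular this identity holds simultaneously for $H=G$ and for $H=Ki(n,k)$.

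Next I would apply Theorem~\ref{klik} term by term. Fix a triple $(t,p,q)$ occurring in the sum. Since the clique number satisfies $k>2$, we are never in the exceptional configuration $k=2,\,t=0,\,p=q=1$, so Theorem~\ref{klik} applies without exception: among all connected $n$-vertex graphs with clique number $k$, the kite graph $Ki(n,k)$ minimizes the number of copies of $B_t(p,q)$. As $G$ is itself such a graph, this gives $\cN(B_t(p,q),G)\ge \cN(B_t(p,q),Ki(n,k))$.

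Finally I would assemble these inequalities. Multiplying each one by its weight $w_{t,p,q}\ge 0$ (non-negativity is exactly what lets the multiplication preserve the direction of the inequality) and summing over all triples yields
$$Q(G)=\sum_{t,p,q} w_{t,p,q}\,\cN(B_t(p,q),G)\ge \sum_{t,p,q} w_{t,p,q}\,\cN(B_t(p,q),Ki(n,k))=Q(Ki(n,k)),$$
which is the desired bound.

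There is no genuinely hard step: all the content is carried by Theorem~\ref{klik}, and the corollary is a straightforward aggregation. The only two points that need care are conceptual rather than computational. One must note that the hypothesis $k>2$ is precisely what excludes the single exceptional case of Theorem~\ref{klik}, so that the kite is simultaneously the minimizer for \emph{every} book graph $B_t(p,q)$ that can appear in $Q$. And one must use that every coefficient $w_{t,p,q}$ is non-negative, since a single negative weight would reverse the corresponding term and break the aggregation. Both are immediate from the definition of $A$ and the statement of Theorem~\ref{klik}.
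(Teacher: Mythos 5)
Your proposal is correct and matches the paper's (implicit) argument exactly: the corollary is stated without a separate proof precisely because it follows from Theorem~\ref{klik} applied term by term, with the hypothesis $k>2$ ruling out the exceptional case and the non-negativity of the weights $w_{t,p,q}$ in the definition of $A$ allowing the inequalities to be summed. Nothing is missing.
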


The case $k=2$ is equivalent to finding the tree with the smallest $Q$-value. This has been studied for several indices, see e.g.
\cite{lz}.

We say that a graph $H$ is \textit{vertex-transitive} if for any $u,v\in V(G)$ there is an automorphism of $G$ that maps $u$ to $v$. Let $Ki(n,H)$ denote the graph we obtain from $H$ by adding a $P_{n-|V(H)|}$ and connecting an endpoint to a vertex of $H$. 

\begin{thm}\label{tranzi}
    Let $H$ be a connected vertex-transitive graph. For any integers $t,p,q,k,n$, the connected $n$-vertex graph  that contains $H$ and has the least number of copies of $B_t(p,q)$ is the kite graph $Ki(n,H)$, except for $H=K_2, t=0, p=q=1$, where the star $S_n$ contains the least number of copies of $S_{a,b}=P_4$.
\end{thm}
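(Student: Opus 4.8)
The plan is to follow the strategy behind Theorem~\ref{klik} (which is essentially the case $H=K_k$, since $K_k$ is vertex-transitive) and to isolate the single place where a special property of the core is used. First I would reduce to edge-minimal graphs: since every copy of $B_t(p,q)$ in a subgraph of $G$ is again a copy in $G$, the map $G\mapsto\cN(B_t(p,q),G)$ is monotone under adding edges, so a minimizer among connected $n$-vertex graphs containing $H$ must be edge-minimal. Such a graph has exactly $e(H)+(n-|V(H)|)$ edges, and if $F=G-E(H)$ then $F$ is a forest (a cycle in $F$ would contain a removable edge). A short argument shows that each component of $F$ contains exactly one vertex of $H$: if a component had two, they would be joined by a path through $F$ and also through $H$, giving a removable edge, while a component with none would be a separate component of $G$. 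Hence every edge-minimal candidate is a copy of $H$ with one rooted tree hung at each vertex, the trees partitioning the $n-|V(H)|$ outside vertices. Copies of $B_t(p,q)$ lying entirely inside the fixed copy of $H$ are common to all candidates, so only copies meeting the forest need to be compared.

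Next I would exploit that no triangle of $G$ uses an outside vertex (again by edge-minimality). Consequently every copy of $B_t(p,q)$ whose rootlet edge lies in $H$ -- in particular every copy with $t\ge 1$ -- contributes to the count only through the $G$-degrees of the vertices of $H$. Attaching a nontrivial tree at a vertex of $H$ raises that vertex's degree, so this part of the count is minimized by raising as few degrees of $H$, and by as little, as possible, i.e.\ by hanging a single tree through a single edge at a single vertex $u_0$. Vertex-transitivity of $H$ is exactly what guarantees that the resulting contribution is independent of the choice of $u_0$; this is also what makes $Ki(n,H)$ well defined, and it is the only property of the core that the argument uses.

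The remaining copies are those whose rootlet edge leaves $H$; these have $t=0$, so they are double stars $S_{p,q}$, and their number is governed by the degree sequence of the attached forest together with the local data at the boundary vertex $u_0$ (uniform by transitivity). For these I would invoke the extremal-tree estimate that, among trees on a prescribed number of vertices, the path minimizes the number of copies of $S_{p,q}$ for every $(p,q)\ne(1,1)$. Combining this with the previous paragraph forces the attached forest to be a single path at one vertex, that is $G=Ki(n,H)$, for all $(t,p,q)$ other than the double star $S_{1,1}=P_4$.

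The genuinely delicate point -- and the step I expect to be the main obstacle -- is the index $S_{1,1}$. Here the relevant quantity is $\sum_{uv\in E(G)}(d(u)-1)(d(v)-1)$, which decreases when degree is pushed onto leaves, so neither a single path nor concentration at one vertex is forced: spreading the outside vertices into several shorter paths lowers the forest contribution. When $H=K_2$ the whole graph is a tree and the star $S_n$, having no $P_4$ at all, indeed beats the kite, which is precisely the stated exception. For every vertex-transitive $H\ne K_2$ one must instead show that the extra copies forced in the in-$H$ count by spreading (second paragraph) strictly outweigh the forest savings that are available only for $S_{1,1}$ (third paragraph), so that $Ki(n,H)$ remains optimal. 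Since the theorem is claimed for all $n$ rather than asymptotically, and a back-of-the-envelope count shows the two effects differ only by small additive constants, this step requires an exact comparison of the competing contributions rather than the clean monotonicity that settles every other $(t,p,q)$.
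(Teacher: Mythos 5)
Your structural reduction (an edge-minimal minimizer is a copy of $H$ with rooted trees hung at its vertices) and the monotonicity of the in-$H$ contribution are sound, and your global decomposition is a legitimate alternative to the paper's route, which is induction on $n$ following Theorem~\ref{klik}. But your claim that monotonicity plus ``the path minimizes $S_{p,q}$ among trees'' cleanly settles every $(t,p,q)\neq(0,1,1)$ is false at a tight case you do not flag: $t=0$, $p=1$, $q=d$, where $d$ is the common degree of $H$. Concretely, at the step $n=|V(H)|+2$ the kite gains $\binom{d}{q}$ new copies of $S_{1,q}$ (rootlet edge on the pendant path, $q$ leaves taken inside $H$), while a configuration with leaves at two vertices gains at least $d\binom{d-1}{q}$ copies by the natural count (the new leaf as the single leaf, all other vertices inside $H$); since $d\binom{d-1}{q}\ge \binom{d}{q}$ holds exactly when $q\le d-1$, at $q=d$ the natural count yields $0$ against the kite's $1$, so your framework outputs the wrong-direction inequality there. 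The extremal-tree fact you invoke cannot help, since for $q\ge 2$ a path contains no vertex of degree $3$ and contributes nothing; the shortfall sits entirely in the cross-boundary terms. The paper resolves exactly this point with an ad hoc construction: the new leaf $v$ together with $d-1$ neighbors of its attachment vertex $v_0$ form the $d$ leaves at one center, the remaining neighbor $v_1$ of $v_0$ is the other center, and a further neighbor of $v_1$ (available because $H$ is not complete; the complete case is Theorem~\ref{klik}) serves as the single leaf.

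Conversely, the case you single out as the main obstacle, $S_{1,1}=P_4$, is in fact the easy one, and you leave it unproved. The paper closes it with two short counts: at $n=|V(H)|+2$ the kite gains $d$ new copies of $P_4$, while attaching the second leaf anywhere else gains at least $d(d-1)$ (choose a neighbor of the attachment vertex inside $H$, then a neighbor of that vertex), so one is done unless $d=1$, i.e., $H=K_2$, the stated exception; and for $n\ge |V(H)|+3$, extending the pendant path of the kite creates exactly one new copy of each of $P_2,P_3,P_4$, whereas adding a leaf to any connected graph creates at least one new copy of each unless the host graph is a star---impossible for $H\neq K_2$, since the only connected vertex-transitive subgraph of a star is $K_2$. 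So there is no delicate trade-off between forest savings and in-$H$ costs to balance for $P_4$, and your back-of-the-envelope impression that the comparison hinges on small additive constants is misplaced; as written, your proposal proves neither the $q=d$ case nor the $P_4$ case, and these are precisely the two places where vertex-transitivity and the exception $H=K_2$ enter the argument.
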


If $H$ is not vertex-transitive, then the same definition for generalized kite graphs results on non-isomorphic graphs, depending on what vertex of $H$ is chosen. Let $\mathcal{K}i(n,H)$ denote the family of such graphs. It is likely that a member of $\mathcal{K}i(n,H)$ contains the smallest number of copies of $B_t(p,q)$ among connected $n$-vertex graphs containing $H$, except for some small graphs $H$ and $B_t(p,q)=P_4$. However, this would not immediately solve the problem for indices in $A$ (or in $A_0$), since the actual extremal graph can be different for different values of $t,p,q$. Still, we believe that for indices in $A$, a member of $\mathcal{K}i(n,H)$ has the smallest $Q$-value among connected $n$-vertex graphs containing $H$, except for some small graphs $H$.

\section{Proofs}

We start with the proof of Proposition \ref{main}, which states that $A_0\subset A$.

\begin{proof}[Proof of Proposition \ref{main}]
    Observe that for each index in $A_0$, each term is a polynomial with variables $d(u)$ and $d(v)$. After expanding, each term is of the form $d(u)^ad(v)^b$ for some $a$ and $b$, where $uv\in E(G)$. We can consider this as taking neighbors of $u$ with repetition $a$ times and neighbors of $v$ with repetition $b$ times. This way we obtain a set $U$ of neighbors of $u$ and a set $V$ of neighbors of $v$. Let $t=|U\cap V|$, $p=U\setminus (V\cup \{v\})$ and $q=V\setminus (U\cup\{u\})$. Clearly the vertices in $U\cup\{u\}\cup V\cup\{v\}$ form a copy of $B_t(p,q)$.

    A copy of $B_t(p,q)$ with rootlet vertices $u$ and $v$ is counted multiple times: for a given term $d(u)^ad(v)^b$ with $a\ge p$, $b\ge q$ and $a+b\ge p+q+2t$, the copy of $B_t(p,q)$ is counted whenever the $t$ page vertices are picked at least once from the neighbors of $u$ and at least once from the neighbors of $v$, the $p$ leaf vertices adjacent to $u$ are picked at least once from the neighbors of $u$ and zero times from the neighbors of $v$, and similarly the $q$ leaf vertices adjacent to $v$ are picked at least once from the neighbors of $v$ and zero times from the neighbors of $u$. Additionally, such a term may appear multiple times, depending on the actual topological index we study. Moreover, we have to consider each term $d(u)^ad(v)^b$ with $a\ge p$, $b\ge q$ and $a+b\ge p+q+2t$.

    The above argument shows that for a given index, determining $w_{t,p,q}$ may be complicated. However, it is clear that $w_{t,p,q}\ge 0$ and that we can assume that $t,p,q\le k$ for some $k$ (in most of the above cases, $k=r$). 

    It is left to show that $w_{t,p,q}$ depends only on $t,p,q$ and the given index from $A_0$, but not on $G$. Recall that when we described how many times we count a copy of $B_t(p,q)$, we considered the terms $d(u)^{a}d(v)^b$, which depend only on the index. We considered such a quantity as taking neighbors of $u$ and $v$ with repetition. The particular sets $U$ and $V$ appear as many times as we can take those sets with repetition. 
    
    For example if $|U|=1$, then $U$ appears only once, by taking that element each time. If $|U|=2$, then $|U|$ appear $2^{a}-2$ times. Indeed, each time we take one of the two elements of $U$, resulting in $2^{a}$, but we have to subtract the cases when the same element appears $a$ times. For larger $U$ a similar but more complicated calculation gives how many times $U$ appear. The actual value is of no interest at this point (it is actually $\sum_{i=0}^{|U|-1} (-1)^{i}\binom{|U|}{i}(|U|-i)^{a}$, as shown in \cite{ger4}). What matters is that it depends only on $a$ and $|U|$, but not on $G$ or $d(u)$. Similarly, the number of times $V$ appears depends only on $a$ and $|V|$. 

    To summarize, the terms depend only of the index in $A_0$, while the number of times a copy of $B_t(p,q)$ is counted at one term depends only on the term and $t,p,q$. This completes the proof.
\end{proof}

We continue with the proof of Theorem \ref{gentu}.

\begin{proof}[Proof of Theorem \ref{gentu}]
    The first statement \textbf{(i)} follows from a result of Morrison, Nir, Norin, Rza{\.z}ewski and Wesolek \cite{mnnrw} that states that for any $H$, $\ex(n,H,K_k)=\cN(H,T(n,k-1))$ if $k$ is large enough. 

To prove \textbf{(ii)}, we use a result of Gy\H ori, Salia, Tompkins and Zamora \cite{gystz}, which states that $\ex(n,P_\ell,P_k)=(1+o(1))\cN(P_\ell,K_{\lfloor (k-1)/2\rfloor,n-\lfloor (k-1)/2\rfloor})$. Since $\ex(n,K_3,P_n)=O(n)$, the number of triangles is hidden in the error term. One can prove \textbf{(iii)} analogously, using a result of Gerbner, Gy\H ori, Methuku and Vizer \cite{ggymv} on $\ex(n,P_\ell,C_{2k})$.

The next statement \textbf{(iv)} follows from that fact that among $K_{2,t}$-free $n$-vertex graph, the F\"uredi graph contains asymptotically the most number of edges \cite{fur}, triangles \cite{AS}, paths of any length \cite{gerpal}. 
In fact, it is enough that the number of copies of $P_4$ is asymptotically maximized there, since the number of copies of other graphs has lower order of magnitude, thus is contained in the error term. 

For indices in $A_0'$, we have to deal in addition to the above graphs with $S_{1,2}$, $S_{2,2}$,$B_1(0,1)$ and $B_1(1,1)$. It was shown in \cite{ger6} that the F\"uredi graph gives the maximum asymptotically for $S_{1,2}$ and $S_{2,2}$, but this does not hold for $B_1(1,1)$. Indeed, the F\"uredi graph contains $O(n^{5/2})$ copies of $B_1(1,1)$, while there are $\Theta(n^3)$ copies of $B_{1,1}$ in the graph we obtain by adding a maximum matching to a star $K_{1,n-1}$. However, the F\"uredi graph contains $(1+o(1))\ex(n,S_{2,2},K_{2,t})=\Theta(n^{7/2})$ copies of $S_{2,2}$. We claim that $\ex(n,H,K_{2,t})=O(n^3)$ for $H$ being $P_2,P_3.P_4,K_3,S_{1,2},B_1(1,0),B_1(1,1)$. For the trees and the triangle, this follows from the preceding satements.
For $B_1(0,1)$, we consider a $K_{2,t}$-free $n$-vertex graph $G$ and pick first the vertices of degree 1 and 3, $O(n^2)$ ways. Then their common neighbor $v$ at most $t-1$ ways. Then the common neighbor of $v$ and the vertex of degree 3, at most $t-1$ ways. This shows that there are $O(n^2)$ copies of $B_1(0,1)$. To get a copy of $B_1(1,1)$, we have to pick another vertex, thus there are $O(n^3)$ copies of $B_1(1,1)$. This shows that the number of copies of each graph other than $S_{2,2}$ is negligible, thus $\ex_Q(n,K_{2,t})=(1+o(1))\ex(n,S_{2,2},K_{2,t})=(1+o(1))\cN(S_{2,2},F(n,t))=(1+o(1))Q(F(n,t))$, completing the proof of \textbf{(v)}.
\end{proof}

Let us continue with the proof of Theorem \ref{gentur}.

\begin{proof}
Observe that to determine $Q(G)$, we only have to deal with counting $B_t(p,q)$ with $2t+p+q\le 3$, i.e., we have to count triangles, triangles with one leaf attached
and double stars $S_{p,q}$ with $p,q\le 2$, that are not $S_{2,2}$. As we have mentioned, $T(n,3)$ contains the most copies of any path \cite{T,gypl,ger} and the triangle \cite{zykov} among $K_4$-free graphs. The same holds for the triangle with a leaf attached, using a result of Gerbner and Palmer \cite{gerpal}. Moreover, the same holds for the graph $H$ we obtain from $P_4$ by adding a vertex adjacent to each vertex of the path, using a result from \cite{gypl}. 

Consider now $B_1(1,1)$. We count the copies in an $n$-vertex $K_4$-free graph $G$ by picking a triangle $uvw$ and two more vertices $x,y$. Observe that $x$ and $y$ are both joined to at most two vertices of the triangle. Let $a=a(uvw)$ be the number of pairs $(x,y)$ that are joined to distinct sets of two vertices in the triangle, $b$ be the number of pairs that are joined to the same set of two vertices, and $c=\binom{n-3}{2}-a-b$ be the number of other pairs. Then the triangle $uvw$ can be extended to a copy of $B_1(1,1)$ at most $3a+2b+2c=2\binom{n-3}{2}+a$ ways. Observe that if $x$ and $y$ are joined to distinct sets of two vertices in the triangle, we obtain a copy of $H$, and each copy of $H$ is counted exactly once this way. Therefore, if we add up $a(uvw)$ for each triangle, we obtain $\cN(H,G)$.
This implies that $\cN(B_1(1,1),G)\le 2\binom{n-3}{2}\cN(K_3,G)+\cN(H,G)$. Each of the terms is maximized by the Tur\'an graph, completing the proof of $\textbf{(i)}$.

Consider now $S_{1,2}$. We use a similar argument. We count the copies in an $n$-vertex $K_4$-free graph $G$ by picking a 4-vertex path first and then picking a fifth vertex. Let $a$ be the number of vertices connected to both middle vertices of the $P_4$, then we have $a+n-4$ ways to obtain an $S_{1,2}$. Each copy of $S_{1,2}$ is counted twice this way. Observe that if we add up $a$ for each $P_4$, then we get $\cN(B_1(1,1),G)$. Therefore, $2\cN(S_{1,2},G)=(n-4)\cN(K_3,G)+\cN(B_1(1,1),G)$. Each of the terms is maximized by the Tur\'an graph, completing the proof of $\textbf{(ii)}$. This also completes the proof of \textbf{(iii)}.
\end{proof}

We continue with the proof of Theorem \ref{trif}, which states that for any index $Q$ in $A_1$, $\ex_Q(n,K_3)=Q(K_{m,n-m})$ for some $m$.
We generalize the proof of the special case $\ex(n,S_{a,b},K_3)$ from \cite{gyww}.

\begin{proof}[Proof of Theorem \ref{trif}]
    Let $G$ be a triangle-free graph on $n$ vertices and let $\Delta$ denote its largest degree. If $\Delta<n/2$, then it is clear that $Q(G)<Q(K_{\lfloor n/2\rfloor,\lceil n/2\rceil})$. Indeed, each edge gives less to the sum and there are less edges in $G$ compared to $K_{\lfloor n/2\rfloor,\lceil n/2\rceil}$.

    Assume now that $\Delta\ge n/2$. We will show that there is a $n/2\le \Delta'\le \Delta$ such that for every edge $uv$ of $G$ and every $i$, we have $f(d(u),d(v))\le f(\Delta',n-\Delta')$.  Clearly $d(u)+d(v)\le n$ because of the triangle-free property, thus $f(d(u),d(v))\le f(d(u),n-d(u))$. Therefore, we can pick $\Delta'$ as an integer $i$ between $n/2$ and $\Delta$ with the largest value of $f(i,n-i)$ to prove the claimed inequality. 

    The number of edges in $G$ is at most $\Delta(n-\Delta)$. Indeed, if $v$ has degree $\Delta$, then the neighbors of $v$ form an independent set, thus they each have degree at most $n-\Delta$. Hence $G$ contains at least $\Delta$ vertices of degree at most $n-\Delta$, while other vertices have degree at most $\Delta$.

    We obtain that the number of edges in $G$ is at most the number of edges in $K_{\Delta',n-\Delta'}$, and each edge gives at most as much as the edges of $K_{\Delta',n-\Delta'}$ to the sum.
\end{proof}

Let us continue with the stability version of the above theorem. We will use the Erd\H os-Simonovits stability theorem \cite{erd1,erd2,simi} for triangles (note that the stronger form of the theorem works for any forbidden non-bipartite graph instead of the triangles). Tt states that for any $\varepsilon>0$ there exists $\delta'>0$ such that if a triangle-free $n$-vertex graph has at least $n^2/4-\delta n^2$ edges, then $G$ can be turned to a complete bipartite graph by adding and removing at most $\epsilon n^2$ edges. We will use the following lemma.

\begin{lemma}[Gerbner \cite{ger5}]\label{deltaersim} 
For any $\varepsilon>0$, there exists $\delta>0$ such that if $G$ is an $n$-vertex $F_2$-free graph with maximum degree $\Delta>n/2$ and at least $\Delta(n-\Delta)-\delta n^2$ edges, then $G$ can be turned to a bipartite graph $G'$ by deleting at most $\varepsilon n^2$ edges. 
\end{lemma}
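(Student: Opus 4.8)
The plan is to locate a vertex $v$ of maximum degree $\Delta$ and to measure $G$ against the bipartition with parts $A := N(v)$ and $\{v\}\cup B$, where $B := V(G)\setminus(A\cup\{v\})$. Since $|A|=\Delta>n/2$ and $v$ has no neighbour in $B$, the only edges that fail to cross this bipartition are those lying inside $A$ and those lying inside $B$. Deleting exactly these edges makes $G$ bipartite, so it suffices to prove that $e(G[A])+e(G[B])\le \varepsilon n^2$. I would split this into a bound on $e(G[A])$ coming from $F_2$-freeness and a bound on $e(G[B])$ coming from an elementary degree count.

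For the edges inside $A$ I would use the forbidden subgraph. This is the one point where the weaker hypothesis matters: unlike the triangle-free case, $N(v)$ need not be independent. However, if $G[N(v)]$ contained two vertex-disjoint edges $ab$ and $cd$, then $v$ together with the triangles $vab$ and $vcd$ would form a copy of $F_2$. Hence $G[N(v)]$ has matching number at most $1$, so it is a subgraph of a star or of a triangle, which gives $e(G[A])\le \max(\Delta-1,3)=O(n)$. Thus the failure of $N(v)$ to be edge-free costs only a negligible $o(n^2)$ term.

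For the edges inside $B$ I would count degrees. Writing the exact identity $e(G)=\Delta+e(G[A])+e(A,B)+e(G[B])$ and using $\sum_{b\in B}d(b)=e(A,B)+2e(G[B])\le |B|\Delta$ with $|B|=n-1-\Delta$, one obtains the clean inequality $e(G)\le \Delta(n-\Delta)+e(G[A])-e(G[B])$. Combining this with the hypothesis $e(G)\ge \Delta(n-\Delta)-\delta n^2$ yields $e(G[B])\le e(G[A])+\delta n^2\le \delta n^2+O(n)$. Choosing $\delta=\varepsilon/2$ and $n$ large then gives $e(G[A])+e(G[B])\le \delta n^2+O(n)\le \varepsilon n^2$, and deleting these edges leaves a graph all of whose edges join $A$ to $\{v\}\cup B$, hence bipartite.

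The main obstacle to flag is precisely the control of $e(G[A])$. Since $F_2$-freeness is strictly weaker than triangle-freeness, one cannot assume $N(v)$ is edge-free, and the entire argument collapses if $e(G[A])$ could be $\Theta(n^2)$: the degree-count bound on $e(G[B])$ only forces it small because the $+e(G[A])$ term it competes against is negligible. The matching-number-at-most-$1$ observation, which pins $G[N(v)]$ down to a star or a triangle, is exactly what keeps $e(G[A])=O(n)$ and makes the reduction go through; everything else is routine degree bookkeeping.
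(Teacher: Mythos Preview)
Your argument is correct. The paper does not give its own proof of this lemma; it is quoted verbatim from \cite{ger5} and used as a black box in the proof of Lemma~\ref{trifstab}. So there is no ``paper's approach'' to compare against here, and your self-contained proof is a genuine addition.

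The two ingredients you isolate are exactly the right ones. The $F_2$-freeness step is the crucial one: a matching of size two in $G[N(v)]$ would yield two triangles through $v$ sharing only $v$, hence an $F_2$; so $G[N(v)]$ is a star or a triangle and $e(G[A])\le \max(\Delta-1,3)=O(n)$. The degree identity $e(G)\le \Delta(n-\Delta)+e(G[A])-e(G[B])$ then converts the edge lower bound directly into the required upper bound on $e(G[B])$. One small cosmetic point: with $\delta=\varepsilon/2$ you need $n$ large enough to swallow the $O(n)$ term into $\varepsilon n^2$. This is harmless for the applications in this paper (which are all asymptotic), and for a statement valid for every $n$ one can simply observe that when $n<4/\varepsilon$ the total edge count is already below $\varepsilon n^2$, so deleting everything works.
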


Now we are ready to prove Lemma \ref{trifstab} that we restate here for convenience.

\begin{lemma*}
     For any index $Q$ in $A_1$, and any $\varepsilon>0$, there exists $\delta>0$ such that if $Q(G)\ge (1-\delta)\ex_Q(n,K_3)$ for an $n$-vertex triangle-free graph $G$, then $G$ can be turned to a complete bipartite graph by adding and removing at most $\epsilon n^2$ edges.
\end{lemma*}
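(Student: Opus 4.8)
The plan is to reduce the stability statement for an arbitrary monotone-increasing index $Q \in A_1$ to the edge-counting stability results that are already available, namely the Erd\H os--Simonovits theorem and Lemma \ref{deltaersim}. The key observation is that the proof of Theorem \ref{trif} already identifies the extremal complete bipartite graph $K_{\Delta', n-\Delta'}$ together with the two quantitative facts that drive it: in a triangle-free graph every edge $uv$ satisfies $f(d(u),d(v)) \le f(\Delta', n-\Delta')$, and the number of edges is at most $\Delta(n-\Delta) \le \Delta'(n-\Delta')$. So $Q(G)$ is squeezed between the per-edge contribution and the edge count, and to force near-extremality of $Q$ we must force near-extremality of \emph{both} factors.

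First I would set up the squeeze quantitatively. Write $Q(G) = \sum_{uv \in E(G)} f(d(u),d(v))$ and bound $Q(G) \le |E(G)| \cdot f(\Delta', n-\Delta')$, where $\Delta'$ is chosen as in the proof of Theorem \ref{trif}. Since $\ex_Q(n,K_3) = Q(K_{m,n-m})$ for the optimal $m$, and this value is of order $n^2$ times a bounded per-edge weight, the hypothesis $Q(G) \ge (1-\delta)\ex_Q(n,K_3)$ combined with the upper bound $Q(G) \le |E(G)| f(\Delta',n-\Delta')$ will yield a lower bound on $|E(G)|$ of the form $|E(G)| \ge (1-\delta')\,\Delta'(n-\Delta')$ for some $\delta'$ comparable to $\delta$. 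The point is that if $G$ had too few edges, then even weighting each edge maximally could not reach $(1-\delta)$ of the optimum. I would first dispose of the case $\Delta < n/2$ (or $\Delta$ only slightly above $n/2$): there the upper bound on $Q(G)$ is strictly below $(1-\delta)\ex_Q(n,K_3)$ once $\delta$ is small, so this case cannot occur, and I may assume $\Delta \ge n/2$.

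With $\Delta \ge n/2$ and a lower bound on $|E(G)|$ of the form $\Delta(n-\Delta) - \delta'' n^2$ in hand, I would then invoke Lemma \ref{deltaersim} to conclude that $G$ can be made bipartite by deleting at most $(\varepsilon/2) n^2$ edges, and then the Erd\H os--Simonovits stability theorem (or a direct argument on the resulting near-extremal bipartite graph) to upgrade "bipartite" to "complete bipartite after adding and removing at most $\varepsilon n^2$ edges." Concretely, once $G$ is within $\delta'' n^2$ edges of the maximum edge count for a triangle-free graph of its maximum degree, it has close to $n^2/4$ edges overall, so the hypotheses of the classical stability theorem are met and a complete bipartite target is reached.

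The main obstacle I anticipate is the bookkeeping that converts "$Q$ is within $(1-\delta)$ of optimum" into "$|E(G)|$ is within $\delta''n^2$ of its maximum," because $f$ is only assumed monotone, not bounded below away from zero, so I must be careful that edges with small $f$-weight do not let $G$ hide a large edge deficit while still achieving large $Q$. The resolution is that the per-edge weight is uniformly bounded above by $f(\Delta',n-\Delta')$, and the optimum $\ex_Q(n,K_3)$ uses $\Theta(n^2)$ edges each at essentially this maximal weight; hence a proportional deficit in $Q$ forces a proportional deficit in the effective number of maximally-weighted edges, which (since every edge contributes at most the maximum) forces a genuine edge deficit of the same order. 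Making the dependence of $\delta''$ on $\delta$ and on the index explicit, uniformly in $n$, is the delicate part, but it is a finite chase once the squeeze is written out, after which Lemma \ref{deltaersim} and Erd\H os--Simonovits finish the argument.
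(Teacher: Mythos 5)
Your overall strategy is essentially the paper's: bound $Q(G)$ by $|E(G)|\cdot f(\Delta',n-\Delta')$, extract a lower bound on the edge count from the hypothesis, and feed it into Lemma \ref{deltaersim}. Indeed, for $\Delta\ge n/2$ your squeeze is correct and is exactly the paper's argument: since $K_{\Delta',n-\Delta'}$ is triangle-free, $(1-\delta)\Delta'(n-\Delta')f(\Delta',n-\Delta')\le(1-\delta)\ex_Q(n,K_3)\le Q(G)\le|E(G)|f(\Delta',n-\Delta')$ yields $|E(G)|\ge(1-\delta)\Delta'(n-\Delta')\ge\Delta(n-\Delta)-\delta n^2/4$, which is what Lemma \ref{deltaersim} needs.

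However, your treatment of the case $\Delta<n/2$ contains a genuine error. You claim this case ``cannot occur'' because the upper bound on $Q(G)$ would fall strictly below $(1-\delta)\ex_Q(n,K_3)$. That is false for many indices in $A_1$: when $\Delta<n/2$ the bound you actually get is $Q(G)\le\frac{n^2}{4}f\bigl(\lfloor n/2\rfloor,\lceil n/2\rceil\bigr)\approx Q(K_{\lfloor n/2\rfloor,\lceil n/2\rceil})$, and whenever the balanced complete bipartite graph is the (near-)optimum --- e.g.\ for $M_2$ --- this is $(1-o(1))\ex_Q(n,K_3)$, so there is no contradiction. Concretely, $K_{n/2,n/2}$ minus a perfect matching is triangle-free, has maximum degree $n/2-1<n/2$, and has $M_2$-value $(1-o(1))\ex_{M_2}(n,K_3)$: near-extremal graphs with $\Delta<n/2$ genuinely exist. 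For them the correct output is not a contradiction but the stability conclusion itself, which is why the paper splits this case further: if $G$ has at least $n^2/4-\delta'n^2$ edges, the Erd\H os--Simonovits stability theorem directly gives the complete bipartite approximation; if it has fewer, only then is $Q(G)$ too small. Relatedly, your closing step --- that the near-extremal graph ``has close to $n^2/4$ edges overall, so the hypotheses of the classical stability theorem are met'' --- is unjustified in the $\Delta\ge n/2$ branch: a monotone $f$ can make the optimal $K_{m,n-m}$ unbalanced (take $f(x,y)=x^4+y^4$, where $m(n-m)\bigl(m^4+(n-m)^4\bigr)$ is maximized near $m\approx 0.2n$), in which case $|E(G)|\approx\Delta'(n-\Delta')$ is bounded away from $n^2/4$ and Erd\H os--Simonovits does not apply; there the bipartization must come from Lemma \ref{deltaersim} alone. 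So the missing ingredient is precisely the paper's dichotomy in the low-degree case, and the unbalanced-optimum possibility breaks your proposed Erd\H os--Simonovits upgrade.
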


\begin{proof}
    Let $\Delta$ be the maximum degree in $G$. If $\Delta< n/2$, we will use the Erd\H os-Simonovits stability theorem. Thus we are done unless $G$ has less than $n^2/4-\delta' n^2$ edges. We choose $\delta=4\delta'$. In that case $Q(G)<(1-4\delta')\lfloor \frac{n^2}{4}\rfloor f(\lfloor n/2\rfloor,\lceil n/2\rceil)\le (1-\delta)Q(K_{\lfloor n/2\rfloor,\lceil n/2\rceil})$.

    If $\Delta\ge n/2$, then similarly to the proof of Theorem \ref{trif}, we let $\Delta'$ be an integer $i$ between $n/2$ and $\Delta$ with the largest value of $f(i,n-i)$, and observe that $|E(G)|\le \Delta(n-\Delta)\le \Delta'(n-\Delta')$. We use Lemma \ref{deltaersim} to complete the proof if $|E(G)|\ge (1-\delta)\Delta'(n-\Delta')$. If $|E(G)|\le (1-\delta)\Delta'(n-\Delta')$, we obtain a contradiction as in the previous paragraph.
\end{proof}

Let us turn to results in the case $f$ is a polynomial. We need the following simple lemma.

\begin{lemma}\label{ujlem}
    For any index $Q$ in $A_1'$, if $d$ is the degree of the polynomial $f$, then for every edge $uv$, if $G'$ is obtained from $G$ by deleting $uv$, then $Q(G)-Q(G')=O(n^d)$. Moreover, if
    $d(u)=\Theta(n)$ and $d(v)=\Theta(n)$, then $Q(G)-Q(G')=\Theta(n^d)$.
\end{lemma}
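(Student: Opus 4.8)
The plan is to analyze the effect of deleting a single edge $uv$ on the sum $Q(G)=\sum_{xy\in E(G)}f(d(x),d(y))$, tracking exactly which terms change. Deleting $uv$ affects the sum in two ways: the term $f(d_G(u),d_G(v))$ for the edge $uv$ itself disappears, and the degrees of $u$ and $v$ each drop by one, so every term $f(d(u),d(w))$ for a neighbor $w\neq v$ of $u$, and every term $f(d(v),d(w'))$ for a neighbor $w'\neq u$ of $v$, changes its value. All other terms of the sum are unaffected, since no other vertex changes degree. Writing $d=\deg f$, each individual term $f(x,y)$ is bounded by $O(n^d)$ because $x,y\le n-1$; this immediately gives the upper bound $Q(G)-Q(G')=O(n^d)$ for the deletion, since there are at most the term for $uv$ plus $d_G(u)-1+d_G(v)-1=O(n)$ affected terms, but one must be slightly careful here.

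Let me be more precise about the counting, because naively there are $O(n)$ affected edges each contributing $O(n^d)$, which would only give $O(n^{d+1})$. The point is that each affected term \emph{changes} by $O(n^{d-1})$, not by $O(n^d)$: if $g(x)=f(x,c)$ is a polynomial of degree at most $d$ in $x$, then $g(x)-g(x-1)$ is a polynomial of degree at most $d-1$, hence is $O(n^{d-1})$ for $x=O(n)$. So each of the $O(n)$ neighbors of $u$ (resp.\ $v$) contributes a change of $O(n^{d-1})$, for a total of $O(n)\cdot O(n^{d-1})=O(n^d)$; adding the single disappearing term $f(d_G(u),d_G(v))=O(n^d)$ keeps the bound at $O(n^d)$. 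First I would state this discrete-derivative observation as the crux of the upper bound.

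For the lower bound under the hypothesis $d(u)=\Theta(n)$ and $d(v)=\Theta(n)$, the plan is to isolate the leading-order contribution. The disappearing term $f(d_G(u),d_G(v))$ is the dominant one: since $f$ is a monotone polynomial of degree exactly $d$ with nonnegative leading behavior (being in $A_1'$, it is monotone increasing, so its top-degree part has nonnegative coefficients and does not vanish when both arguments are $\Theta(n)$), we get $f(d_G(u),d_G(v))=\Theta(n^d)$. The remaining change, coming from the degree drops at the neighbors, is only $O(n^d)$ by the previous paragraph and does not have a definite sign a priori, so I must argue it cannot cancel the main term. Here monotonicity is the key: decreasing $d(u)$ by one can only decrease each term $f(d(u),d(w))$, so the total contribution of all the neighbor terms to $Q(G)-Q(G')$ is nonnegative, and combined with the strictly positive $\Theta(n^d)$ term $f(d_G(u),d_G(v))$ we conclude $Q(G)-Q(G')=\Theta(n^d)$.

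The main obstacle I anticipate is exactly this lower bound: ensuring that the leading term is genuinely of order $n^d$ and is not accidentally cancelled. Two subtleties need care. First, one must confirm that the degree-$d$ part of $f$ really does not vanish identically on the diagonal-ish region where both arguments are $\Theta(n)$; this is where I would invoke that $f$ is a \emph{monotone} polynomial, so its highest-degree homogeneous part has nonnegative coefficients and is positive when evaluated at a pair of positive $\Theta(n)$ values (otherwise $\deg f$ would be smaller than claimed). Second, the monotonicity of $f$ must be used to guarantee the neighbor-term corrections have a sign that reinforces rather than opposes the main term, as described above. If one did not have monotonicity, the $O(n^d)$ correction could in principle cancel the main term, and the lower bound would fail — so the reliance on $f$ being monotone increasing (i.e.\ $Q\in A_1'\subset A_1$) is essential, not cosmetic.
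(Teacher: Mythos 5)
Your proposal is correct and follows essentially the same route as the paper: the lost term $f(d(u),d(v))$ is $O(n^d)$, each of the $O(n)$ other affected terms changes by a discrete derivative of size $O(n^{d-1})$ (the paper phrases this as $a^pb^q-(a-1)^pb^q=O(n^{p+q-1})$), and the moreover part reduces to $f(d(u),d(v))=\Theta(n^d)$. In fact you are slightly more careful than the paper on the lower bound, making explicit the two points it leaves implicit --- that monotonicity of $f$ gives the neighbor-term corrections a nonnegative sign so they cannot cancel the main term, and that the top-degree part of a monotone polynomial cannot vanish at arguments of order $\Theta(n)$.
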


\begin{proof}
    Deleting the edge $uv$ we lose a term from the sum defining $Q(G)$, which is $f(d(u),d(v))\le f(n,n)=O(n^d)$. For each other edge containing $u$ or $v$, $f(a,b)$ is replaced by $f(a-1,b)$ for some $a,b\le n$. Clearly for a term of the form $a^pb^q$, we have $a^pb^q-(a-1)^pb^q=O(n^{p+q-1})$, thus for each edge containing one of $u$ and $v$, the $Q$-value decreases by $O(n^{d-1})$. There are less than $2n$ such edges, thus $Q(G)$ decreases by $O(n^d)$.

To see the moreover part, observe that $f(d(u),d(v))=\Theta(n^d)$.
\end{proof}

We continue with the proof of Lemma \ref{ezis} that we restate here for convenience.

\begin{lemma*}
         For any index $Q$ in $A_1'$, any graph $F$ with chromatic number 3 and any $\varepsilon>0$, there exists $\delta>0$ such that if $Q(G)\ge (1-\delta)\ex_Q(n,F)$ for an $n$-vertex $F$-free graph $G$, then $G$ can be turned to a complete bipartite graph by adding and removing at most $\epsilon n^2$ edges.
\end{lemma*}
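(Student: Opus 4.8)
The plan is to reduce the chromatic-number-3 case to the triangle case, where Lemma~\ref{trifstab} already gives the desired stability statement. The starting point is the observation that an $F$-free graph with $\chi(F)=3$ can be made triangle-free by deleting relatively few edges, at the cost of a controlled loss in the $Q$-value. Concretely, I would first record the two reference points: by the Erd\H{o}s--Stone--Simonovits theorem, $\ex(n,F)=(1+o(1))n^2/4$, so the extremal $Q$-value satisfies $\ex_Q(n,F)=(1+o(1))\ex_Q(n,K_3)$, and indeed $\ex_Q(n,F)\le(1+o(1))\,\lfloor n^2/4\rfloor\,f(\lfloor n/2\rfloor,\lceil n/2\rceil)$. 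The point is that the complete bipartite graph $K_{\lfloor n/2\rfloor,\lceil n/2\rceil}$ is both $F$-free (being bipartite) and triangle-free, so it serves as a common near-extremal construction for both problems.

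Next I would take a graph $G$ with $Q(G)\ge(1-\delta)\ex_Q(n,F)$ and aim to produce a triangle-free graph $G_0$ on the same vertex set with $Q(G_0)\ge(1-\delta')\ex_Q(n,K_3)$, after which Lemma~\ref{trifstab} finishes the argument (up to choosing $\delta$ small enough and adjusting $\varepsilon$). The mechanism for building $G_0$ is to delete edges lying in triangles. Here is where the polynomial hypothesis (that $Q\in A_1'$) is essential, via Lemma~\ref{ujlem}: each single edge deletion changes $Q$ by $O(n^d)$, where $d=\deg f$. Since $F$ has a color-critical structure only in the corollary, for the lemma I would argue that a supersaturation/removal-type count shows that if $G$ has more than $(1/4+\eta)n^2$ edges it contains many triangles, but more usefully, the standard fact is that any $F$-free graph can be made triangle-free — hence bipartite-ready — by deleting $o(n^2)$ edges. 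The subtlety is that $o(n^2)$ edge deletions could in principle cost as much as $o(n^2)\cdot O(n^d)=o(n^{d+2})$ in $Q$-value, and since $\ex_Q(n,F)=\Theta(n^{d+2})$, this is not automatically negligible; so the count of deleted edges must be made genuinely $O(n^2)$ or better, not merely $o(n^2)$ relative to the wrong scale.

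The main obstacle, then, is quantitative: I must delete \emph{few enough} edges that the total $Q$-loss is at most $\delta'\ex_Q(n,F)$. The clean way is to invoke a stability/removal argument directly. Because $\chi(F)=3$, the graph $F$ contains an odd cycle, and an $F$-free graph with $(1+o(1))n^2/4$ edges is ``almost bipartite'' in the Erd\H{o}s--Simonovits sense; I would use this to partition $V(G)=V_1\cup V_2$ so that the number of edges inside the parts is $o(n^2)$, and then delete exactly those edges to obtain a bipartite (hence triangle-free) graph $G_0$. By Lemma~\ref{ujlem}, deleting $o(n^2)$ edges from a graph of max degree $\le n$ changes $Q$ by $o(n^2)\cdot O(n^d)=o(n^{d+2})=o(\ex_Q(n,F))$, which after fixing the constants is at most $\delta'\ex_Q(n,F)$ for $\delta'$ as small as desired. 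Thus $Q(G_0)\ge(1-\delta')\ex_Q(n,K_3)$.

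Finally, applying Lemma~\ref{trifstab} to the triangle-free graph $G_0$ yields that $G_0$ can be made complete bipartite by adding and removing at most $(\varepsilon/2)n^2$ edges; combining this with the $o(n^2)\le(\varepsilon/2)n^2$ edges separating $G$ from $G_0$ shows that $G$ itself can be made complete bipartite by adding and removing at most $\varepsilon n^2$ edges, which is the conclusion. The chain of quantifiers runs backwards: given $\varepsilon$, choose the $\varepsilon/2$ tolerance in Lemma~\ref{trifstab}, which supplies a $\delta''$; then choose $\delta$ small enough that $(1-\delta)\ex_Q(n,F)$ forces the $o(n^2)$-bipartite-correction bound and $Q(G_0)\ge(1-\delta'')\ex_Q(n,K_3)$. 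I expect the only genuinely delicate point to be confirming that the Erd\H{o}s--Simonovits partition can be applied uniformly across all three-chromatic $F$ and that the resulting in-part edge count is $O(\varepsilon n^2)$ with constants independent of $n$.
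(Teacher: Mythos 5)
Your overall architecture---delete few edges to reach a triangle-free graph, bound the $Q$-loss via Lemma~\ref{ujlem} against the scale $\Theta(n^{d+2})$, note $\ex_Q(n,K_3)\le\ex_Q(n,F)$ because complete bipartite graphs are $F$-free, and finish with Lemma~\ref{trifstab}---is the paper's. But the mechanism you finally commit to for producing the triangle-free graph $G_0$ has a genuine gap. You invoke Erd\H{o}s--Simonovits stability, whose hypothesis is that the $F$-free graph $G$ has $(1/4-o(1))n^2$ edges; the lemma's hypothesis $Q(G)\ge(1-\delta)\ex_Q(n,F)$ does \emph{not} imply this edge count. Take $f(x,y)=x^4+y^4$, i.e., the index $M_{4,0}$ (equivalently $R^{(0)}_5$), which lies in $A_0\subset A_1'$: then $Q(K_{m,n-m})=m(n-m)\bigl(m^4+(n-m)^4\bigr)$, which for $m=\alpha n$ equals $\alpha(1-\alpha)\bigl(\alpha^4+(1-\alpha)^4\bigr)n^6$, maximized near $\alpha\approx 0.16$ with value $\approx 0.066\,n^6$, strictly beating the balanced value $n^6/32\approx 0.031\,n^6$. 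So a near-$Q$-extremal $F$-free graph can be (close to) an unbalanced $K_{m,n-m}$ with only about $0.14\,n^2<(1/4-c)n^2$ edges, where Erd\H{o}s--Simonovits stability says nothing. The same example refutes your ``reference point'' $\ex_Q(n,F)\le(1+o(1))\lfloor n^2/4\rfloor f(\lfloor n/2\rfloor,\lceil n/2\rceil)$: the $Q$-value is not governed by the edge count alone, and the $Q$-extremal complete bipartite graph need not be balanced---this is exactly why Theorem~\ref{trif} only asserts $\ex_Q(n,K_3)=Q(K_{m,n-m})$ for \emph{some} $m$, and why Lemma~\ref{deltaersim}, rather than plain Erd\H{o}s--Simonovits, sits underneath Lemma~\ref{trifstab}. (Separately, your worry that a loss of $o(n^2)\cdot O(n^d)=o(n^{d+2})$ ``is not automatically negligible'' against $\Theta(n^{d+2})$ is backwards: such a loss is negligible; the delicate point is justifying the edge-deletion count, not its cost.)

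The fix is the route you mention in passing in your second paragraph and then abandon, and it is the paper's actual proof: the triangle removal lemma. Since $\chi(F)=3$, we have $F\subseteq K_{t,t,t}$ for $t=|V(F)|$, so by supersaturation an $F$-free graph contains $o(n^3)$ triangles, and the removal lemma lets you delete at most $\varepsilon' n^2$ edges, for any prescribed $\varepsilon'>0$ and $n$ large, to obtain a triangle-free $G'$. Lemma~\ref{ujlem} gives $Q(G')\ge Q(G)-c\varepsilon' n^{d+2}$, and since $\ex_Q(n,K_3)\ge Q(K_{\lfloor n/2\rfloor,\lceil n/2\rceil})=\Omega(n^{d+2})$, choosing $\varepsilon'$ small in terms of $\delta'$ yields $Q(G')\ge(1-\delta')\ex_Q(n,K_3)$; then Lemma~\ref{trifstab} applies, and the quantifier bookkeeping in your final paragraph goes through unchanged. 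With that one substitution your argument becomes correct and coincides with the paper's.
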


\begin{proof} We use the triangle removal lemma \cite{rsz}. It states that we can delete $\varepsilon' n^2$ edges from $G$ to obtain a triangle-free graph $G'$ for some $\varepsilon'>0$ of our choice. Having $\varepsilon'\le \varepsilon/2$ and applying Lemma \ref{trifstab} to $G'$ with $\varepsilon/2$, we changed at most $\varepsilon n^2$ edges. We only need to show that we can apply Lemma \ref{trifstab} to $G'$, i.e., $Q(G')\ge (1-\delta')\ex_Q(n,K_3)$ for some $\delta'>0$. Recall that $\ex_Q(n,K_3)=Q(K_{m,n-m})\le\ex_Q(n,F)$ by Theorem \ref{trif} and by the chromatic number of $F$. Therefore, $Q(G)\ge (1-\delta)\ex_Q(n,K_3)$. 

Let $d$ be the degree of the polynomial $f$. Then clearly $\ex_Q(n,K_3)\ge Q(K_{\lfloor n/2\rfloor,\lceil n/2\rceil})=\Omega(n^{d+2})$, while $\ex_Q(n,K_3)\le Q(K_n)=O(n^{d+2})$. We delete $\varepsilon' n^2$ edges, thus using Lemma \ref{ujlem} we have $Q(G')\ge Q(G)-\varepsilon' cn^{d+2}$. By choosing $\varepsilon'$ small enough, we can have that $Q(G')\ge Q(G)-\delta Q(G)$, thus $Q(G')\ge (1-2\delta)Q(G)$. By picking $\delta=\delta'/2$, the proof is complete. 
\end{proof}

Let us continue with the proof of Corollary \ref{corro} that we restate here for convenience.

\begin{corollary*}
    For any index $Q$ in $A_1'$ and any graph $F$ with chromatic number 3 and a color-critical edge, we have that $\ex_Q(n,F)=Q(K_{m,n-m})$ for some $m$, provided $n$ is sufficiently large.
\end{corollary*}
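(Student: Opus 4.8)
The plan is to upgrade the approximate structure produced by Lemma \ref{ezis} to an exact one, using the color-critical edge to rule out edges inside the parts and then monotonicity to fill the parts out to a complete bipartite graph.

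First I would fix an extremal $F$-free graph $G$, so that $Q(G)=\ex_Q(n,F)$. Since $\chi(F)=3$, every complete bipartite graph is $F$-free, so $\ex_Q(n,F)\ge \ex_Q(n,K_3)=Q(K_{m,n-m})$ by Theorem \ref{trif}; writing $d$ for the degree of $f$, one checks that $\ex_Q(n,K_3)=\Theta(n^{d+2})$ and that the optimal $m$ satisfies $m,n-m=\Theta(n)$. In particular $G$ meets the hypothesis of Lemma \ref{ezis}, so there is a bipartition $V(G)=A\cup B$ with $|E(G)\triangle E(K_{A,B})|\le \varepsilon n^2$. By Lemma \ref{ujlem} an edit of $\varepsilon n^2$ edges changes $Q$ by $O(\varepsilon n^{d+2})$, so $Q(K_{A,B})=\Theta(n^{d+2})$ and hence $|A|,|B|=\Theta(n)$; moreover, since there are at most $\varepsilon n^2$ edges inside the parts and at most $\varepsilon n^2$ missing crossing pairs, all but $O(\sqrt\varepsilon\, n)$ vertices are \emph{good}, meaning of degree $\Theta(n)$ with all but $O(\sqrt\varepsilon\, n)$ of their neighbors in the opposite part. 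The goal is then to prove that the extremal $G$ is itself complete bipartite.

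The heart of the argument is that $G$ has no edge inside a part, and here the color-critical edge is essential. Let $e=xy$ be a color-critical edge of $F$, so that $F-e$ is bipartite and, in any proper $2$-coloring of $F-e$, the vertices $x$ and $y$ lie in a common color class $X$ (the other being $Y$). Suppose $uv$ is an edge inside $A$ with both $u,v$ good. Then $u$ and $v$ have $\Theta(n)$ common neighbors in $B$, and I would embed $F$ by sending $x\mapsto u$, $y\mapsto v$, the remaining vertices of $X$ to further good vertices of $A$, and the vertices of $Y$ to $B$. Since $|V(F)|$ is a constant and each good vertex misses only $O(\sqrt\varepsilon\, n)$ vertices of the opposite part, a greedy choice avoids the $O(\sqrt\varepsilon\, n)$ forbidden non-neighbors and realizes every edge of $F-e$ across $A$ and $B$, while $uv$ realizes $e$; this yields a copy of $F$, a contradiction. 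Thus there is no inside edge between two good vertices, and by symmetry the same holds inside $B$. It then remains to remove inside edges incident to the $O(\sqrt\varepsilon\, n)$ exceptional vertices, and here I would use extremality rather than $F$-freeness, since an exceptional endpoint need not share many neighbors with its partner: delete all edges incident to exceptional vertices and rejoin each such vertex completely to one part so that the result is a subgraph of a complete bipartite graph, hence bipartite and $F$-free. Once $G\subseteq K_{A,B}$ is bipartite, any missing crossing edge can be added while keeping the graph bipartite, and by monotonicity of $f$ together with the ``moreover'' part of Lemma \ref{ujlem} this strictly increases $Q$; so extremality forces $G=K_{A,B}$, which must then be the $K_3$-optimal complete bipartite graph, giving $\ex_Q(n,F)=Q(K_{m,n-m})$.

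I expect the main obstacle to be the exceptional-vertex step. Each rejoined exceptional vertex acquires $\Theta(n)$ neighbors of degree $\Theta(n)$, gaining $\Theta(n^{d+1})$ by Lemma \ref{ujlem}, but the deleted incident edges (including those between two exceptional vertices) also account for up to $\Theta(\sqrt\varepsilon\, n^{d+2})$ in total, so a crude bound does not settle the sign of the change. One must argue per vertex that, precisely because an exceptional vertex has a constant fraction of its potential crossing neighbors missing (or an abnormally low degree), its current incident contribution is strictly smaller than what full reattachment to the large opposite part provides, and then carry this out for all exceptional vertices simultaneously while controlling the double-counting of inside edges with both endpoints exceptional. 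This delicate accounting is what makes the passage from stability to exactness go through.
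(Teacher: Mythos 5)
Your skeleton matches the paper's (stability via Lemma \ref{ezis}, then a clean-up step powered by the color-critical edge, then edge-addition via monotonicity and Lemma \ref{ujlem}), and your first half is sound: the greedy embedding of $F$ with $x\mapsto u$, $y\mapsto v$ correctly rules out inside edges between two \emph{good} vertices. But there is a genuine gap exactly where you flag it, and your flag does not repair it. Your plan for exceptional vertices --- delete all their incident edges and rejoin each fully to one side --- has no sign control, and the per-vertex claim you hope to substitute (``an exceptional vertex's current incident contribution is strictly smaller than what full reattachment provides'') is simply false without further structural input: an exceptional vertex adjacent to nearly all of $A$ \emph{and} nearly all of $B$ contributes strictly more, for any monotone $f$, than after reattachment to a single part, since reattachment both deletes edges and lowers degrees. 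So the step cannot be carried out ``per vertex'' on degree/missing-edge statistics alone; one must extract more from $F$-freeness than the good--good case.

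The paper's resolution, which is the idea missing from your proposal, is to run the color-critical-edge argument on \emph{every} inside edge $uv$, not only good--good ones, in the following quantitative form: if $B'$ is the common neighborhood of $u,v$ in $B$, then a $K_{|V(F)|,|V(F)|}$ between $A$ and $B'$ would complete a copy of $F$, so by K\H ov\'ari--S\'os--Tur\'an there are $\Omega(|A||B'|)$ missing crossing pairs, forcing $|B'|=o(n)$ and hence $d_B(u)+d_B(v)\le |B|+o(n)$. Combined with choosing the partition minimizing inside edges (so every vertex has inside-degree at most its crossing-degree), this yields a dichotomy for each inside edge: either some endpoint has degree $o(n)$ in all of $G$ (the class $A_1$), or both endpoints are incident to $\Omega(n)$ missing crossing edges (the class $A_2$, which then has size $o(n)$ because only $o(n^2)$ crossing pairs are missing). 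This dichotomy is precisely what makes the accounting close: deleting inside edges costs only $o(|A_1|n^{d+1})+O(|A_2|^2n^d)$ by Lemma \ref{ujlem}, while adding the missing crossing edges gains $\Omega(|A_1|n^{d+1}+|A_2|n^{d+1})$ --- using the ``moreover'' part of Lemma \ref{ujlem} once each low-degree vertex's degree passes $|B|/2$ --- and $O(|A_2|^2n^d)=o(|A_2|n^{d+1})$ since $|A_2|=o(n)$. The resulting graph is complete bipartite, hence $F$-free, contradicting extremality; Theorem \ref{trif} then pins down the extremal value as $Q(K_{m,n-m})$. Without this per-edge K\H ov\'ari--S\'os--Tur\'an dichotomy (or an equivalent), your ``delicate accounting'' step remains unproved, and as noted it cannot be proved in the per-vertex form you describe.
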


We remark that the proof is quite standard in the case of counting subgraphs.

\begin{proof}
    Let $G$ be an $n$-vertex $F$-free graph with $Q(G)=\ex_Q(n,F)$. By Lemma \ref{ezis}, we can partition $V(G)$ to parts $A$ and $B$ such that there are $o(n^2)$ edges inside $A$ or $B$ and $o(n^2)$ edges are missing between $A$ and $B$. Among such partitions, we pick one with the smallest number of edges inside the parts. In particular, this implies that each vertex has more neighbors in the other part than in its own part. If $|A|=o(n)$ or $|B|=o(n)$, then there are $o(n^2)$ edges in $G$, and as in the proof of Lemma \ref{ezis}, they contribute $o(n^2f(n,n))$ to $Q(G)$, thus $Q(G)<Q(T(n,2))$, a contradiction.

  Assume without loss of generality that deleting the edges inside $A$ would decrease the value of $Q$ by at least as much as deleting the edges inside $B$. 
  Let $uv$ be an edge inside $A$, and let $B'$ be the set of common neighbors of $u$ and $v$ inside $B$. If there is a $K_{|V(F)|,|V(F)|}$ between $A$ and $B'$, then with $u$ and $v$ they form a copy of $F$, a contradiction. If there is no $K_{|V(F)|,|V(F)|}$, then it is well-known \cite{kst} that there are $\Omega(|A||B'|)$ edges missing between $A$ and $B'$. This implies $|B'|=o(n)$. Therefore, $u$ and $v$ are incident to $|B|+o(n)$ edges between parts. Either both $u$ and $v$ are incident to $\Omega(n)$ missing edges between the parts, or one of them, say $u$ is incident to $|B|-o(n)$ edges between parts, thus $v$ is incident to $o(n)$ edges between parts, hence $v$ is incident to $o(n)$ edges altogether.

    Let $A_1$ denote the set of vertices in $A$ incident to $o(n)$ edges of $G$, and let $A_2$ denote the set of vertices in $A\setminus A_1$ incident to $\Omega(n)$ missing edges. Now the number of edges inside $A$ is at most $o(|A_1|n)+\binom{|A_2|}{2}$. The number of edges missing between parts is $\Omega(|A_1|n+|A_2|n)$. 

    Using Lemma \ref{ujlem}, deleting the edges inside $A$ and inside $B$, $Q(G)$ decreases by $o(|A_1|n^{d+1})+O(|A_2|^2n^d)$. Observe that $|A_2|=o(n)$, since otherwise  After that, we can add the missing edges between parts. The first edges incident to a vertex $v\in A_1$ may not increase $Q$ by much, but for each such $v$, after the degree of $v$ reaches $|B|/2$, the remaining linearly many edges each increase $Q$ by $\Theta(n^d)$. This shows that adding the missing edges incident to $A_1$ increases $Q$ by $\Omega(|A_1|n^{d+1})$. Adding the missing edges incident to $A_2$ increases $Q$ by $\Omega(|A_2|2n^{d+1})$. Therefore, $Q$ increases while the resulting graph is bipartite, thus $F$-free, a contradiction completing the proof. \end{proof}

We continue with the proof of Theorem \ref{klik}, which states that the connected $n$-vertex graph with clique number $k$ that contains the least number of copies of $B_t(p,q)$ is the kite graph $Ki(n,k)$, except for $k=2, t=0, p=q=1$.

\begin{proof}[Proof of Theorem \ref{klik}]
    We use induction on $n$. The base case $n=k$ is trivial. The case $n=k+1$ is also trivial once we observe that each connected graph on $k+1$ vertices with clique number $k$ contains $Ki(k+1,k)$ and adding more edges does not decrease the number of copies of any graph.

    Assume now that $n=k+2$. If $t\ge 1$ or $p,q\ge 2$, then $Ki(k+2,k)$ contains the same number of copies of $B_t(p,q)$ as $Ki(k+1,k)$. If $t=0$, $p\le 1$, let $G$ consist of $K_k$ with two leaves $u,v$ attached to distinct vertices and $G'$ consist of $K_k$ with two leaves $u,v$ attached to the same vertex, these are the only other edge-minimal connected graphs with clique number $k$. We compare $\cN(B_t(p,q),Ki(k+2,k))$ to $\cN(B_t(p,q),G)$ and $\cN(B_t(p,q),G')$. Each of these graphs contains $Ki(k+1,k)$, so we will check how many more copies of $B_t(p,q)$ are created. If $p=0$, then $\cN(B_t(p,q),Ki(k+2,k))=\cN(B_t(p,q),Ki(k+1,k))$, unless $q\le 1$. Clearly the case $q=0$ is trivial, and in the case $q=1$, we have one more new copy of $P_3$ in $Ki(k+2,k)$, and at least one more new copy in $G$ and in $G'$. Thus we can assume that $p=1$. If $q=1$, then we have $k-1$ new copies of $P_4$ in $Ki(k+2,k)$, and at least $\binom{k-1}{k-2}$ new copies of $P_4$ in $G$ and $G'$, thus we are done if $k>2$. The case $k=2$ is trivial, since the star does not contain a $P_4$. If $q\ge 2$, then there are $\binom{k-1}{q}$ new copies of $B_t(p,q)$ in $Ki(k+2,k)$. In $G$ and $G'$, by adding $v$, we get at least $(k-1)\binom{k-2}{q}$ new copies by considering those where $v$ is the single leaf adjacent to one of the rootlet vertices, and the other vertices are from the clique. We are done unless $q=k-1$. In this case there is one new copy of $B_t(p,q)$ in $Ki(k+2,k)$. Observe that $B_t(p,q)$ has $k+2$ vertices. Clearly we can find $B_t(p,q)=S_{1,q}$ in $G$, where $u$ and $v$ are leaves adjacent to the same rootlet vertex, and also in $G'$, where $u$ and $v$ are leaves adjacent to different rootlet vertices.
    
Assume now that $n\ge k+3$. Then adding the last vertex to the path in $Ki(n,k)$ creates only $P_2,P_3,P_4$ generalized book graphs, one of each. Clearly adding any more leaf to any connected graph $G_0$ on $n-1$ vertices creates at least one new copy of $P_2$ and $P_3$, and also of $P_4$ unless $G_0$ is a star. In the case $k>2$, $G_0$ cannot be a star, while if $k=2$, the statement is trivial.
\end{proof}

We continue with the proof of Theorem \ref{tranzi}, which states that the connected $n$-vertex graph that contains $H$ as a subgraph and has the least number of copies of $B_t(p,q)$ is the kite graph $Ki(n,H)$, except for $k=2, t=0, p=q=1$.

\begin{proof}[Proof of Theorem \ref{tranzi}]
    The proof goes similarly to the proof of Theorem \ref{klik}. We assume familiarity with that proof. Let $d$ be the degree of vertices of $H$. We use induction, the base cases $n\le |V(H)|+1$ are trivial. In the case $n=|V(H)|+2$, the subcases $t\ge 1$ or $p,q\ge 2$ or $t=0=p$ work the same way. Note however that $G'$ is not unique, as there may be no automorphism that brings two fixed distinct vertices to two other fixed distinct vertices. For example, if $H=C_k$, then the two leaves may be connected to the same vertex of $H$ or to distinct vertices, but then we still have to tell how far those vertices are to get a complete description of the graph. This does not matter in the above simple cases.
    
    If $t=0$ and $p=q=1$, then we have $d$ new copies of $P_4$ in $Ki(k+1,H)$, and at least $d(d-1)$ new copies of $P_4$ in the other graphs, as we add a new leaf $v$ connected to a vertex $v_0$ of $H$, then we pick a neighbor of $V_0$ in $H$ $d$ ways and a neighbor of that vertex. We are done unless $d=1$, but the only 1-regular connected graph is $K_2$.

    If $t=0$, $p=1$ and $q\ge 2$, then there are $\binom{d}{q}$ new copies of $B_t(p,q)$ in $Ki(k+2,H)$. If we add a new leaf $v$ connected to a vertex of $H$, we get at least $d\binom{d-1}{q}$ new copies by considering those where $v$ is the single leaf adjacent to one of the rootlet vertices, and the other vertices are from the copy of $H$. We are done unless $q=d$. In this case there is one new copy of $B_t(p,q)$ in $Ki(k+2,H)$. Clearly we can find a copy of $S_{1,q}$ using the new leaf $v$ and $d-1$ neighbors of $v_0$ in $H$ as the leaves connected to one of the centers and the remaining neighbor $v_1$ of $v_0$ as the other center. If $H$ is not a clique, $v_1$ has another neighbor in $H$ that we can use as the remaining leaf.

    The case $n\ge |V(H)|+3$ works the same way as in the proof of Theorem \ref{klik}. Here we use that the only vertex-transitive subgraph of a star is $K_2$.
\end{proof}

\vskip 0.3truecm

\textbf{Funding}: Research supported by the National Research, Development and Innovation Office - NKFIH under the grants  FK 132060 and KKP-133819.

\vskip 0.3truecm


\end{document}